\documentclass[12pt]{article}
\usepackage{amsmath,amssymb,  amsthm, stmaryrd, MnSymbol, hyperref}
\usepackage{amsfonts}
\usepackage{graphicx}
\usepackage{color}
\usepackage{enumerate}
\usepackage{accents}

\newcommand{\utilde}[1]{\underaccent{\tilde}{#1}}

\newtheorem{theo}{Theorem}[section]

\newtheorem{coro}[theo]{Corollary}

\newtheorem{lemma}[theo]{Lemma}
\newtheorem{re}[theo]{Remark}

\newtheorem{con}[theo]{Convention}
\theoremstyle{definition}
\newtheorem{definition}[theo]{Definition}

\begin{document}
	\title{\bf A Set-Theoretic Translation of Modal Logic via Forcing}
	
\author{
	Somayeh Chopoghloo\thanks{School of Mathematics, Institute for Research in Fundamental Sciences (IPM), P.O. Box 19395-5746, Tehran, Iran. Email: \texttt{s.chopoghloo@gmail.com}.}
	\and
	Mohammad Golshani\footnotemark[1] \thanks{Email: \texttt{golshani.m@gmail.com}.}
}	
	
\date{}
\maketitle
\begin{abstract}
We develop a set-theoretic translation of a normal modal extension $T_m$ of a recursively axiomatizable first-order theory $T$. We first pass to the Henkin expansion of the underlying language by adding witness constants and work with the sentence algebra of this expansion. The translation is constructed using the corresponding Lindenbaum-Tarski algebra, the Stone space of its ultrafilters, and quotient forcing by a $\sigma$-ideal of Borel sets; in particular, the meager ideal yields Cohen forcing and the null ideal yields the random forcing. In a Boolean-valued universe, we interpret the modal operator $\Box$ by membership of the Boolean value of the translated formula in a suitable filter name. We prove that this translation is sound and complete: a modal formula is provable in $T_m$ if and only if its set-theoretic translation is forced in every associated interpretation. We then build Kripke frames and models from generic extensions and show that the forcing interpretation of $\Box$ agrees with quantification over the corresponding accessibility relation. As a consequence, we obtain completeness with respect to the resulting Kripke models.
\end{abstract}
\textbf{MSC 2020:} 03B45, 03E40,  03B10
\\
\\
\textbf{Keywords:} Modal logic, set theory, forcing, completeness, Kripke structures.

\section{Introduction}\label{Sec:Intro}

Modal logic \cite{BRV01,Chagrov1997} provides a formal framework for reasoning about notions such as necessity, possibility, and accessibility. Its first-order extension enriches the propositional modal language by incorporating variables, function, relation, and constant symbols, equality, and quantifiers, thereby enabling reasoning about individuals across possible worlds. As a result, first-order modal logic has become a fundamental formalism in logic, philosophy, linguistics, and computer science. Its semantics is typically given in terms of {\em first-order Kripke models}, which may be interpreted over constant, varying, or increasing domains of quantification. These different domain conditions give rise to distinct logical systems with different expressive power and metatheoretical properties. Comprehensive treatments of the syntax, semantics, completeness and metatheory of first-order modal logic can be found in Fitting and Mendelsohn \cite{Fitting1998}, Garson \cite{Garson2023} and the survey by Braüner and Ghilardi \cite{Brau}.

In this paper, we study a set-theoretic interpretation of modal logic based on Boolean algebras, Stone spaces, and forcing. Our starting point is a consistent recursively
axiomatizable first-order theory $T$ in a language $\mathcal{L}$, which we assume  is Henkinian.
 We then consider the normal modal extension $T_m$ in the language
$\mathcal{L}_m=\mathcal{L}\cup\{\Box,\lozenge\}$. The goal is to associate modal formulas with set-theoretic formulas in such a way that the modal principles of $T_m^{H}$ are reflected in the corresponding forcing semantics.

The construction is carried out using the Lindenbaum-Tarski algebras
$\mathsf{B}_{T}$ and $\mathsf{B}_{T_m}$.  We fix
$\mathsf{B}=\mathsf{B}_{T}$ and consider its Stone space
$\operatorname{St}(\mathsf{B})$. By assuming $T$ extend's Robinson's arithmetic $Q$, the Boolean algebra $\mathsf{B}$ is atomless.  If $I$ is a proper $\sigma$-ideal on the
Borel subsets of $\operatorname{St}(\mathsf{B})$, we form the quotient
forcing notion
\[
\mathsf{P}_I=(\mathcal{B}\bigl(\operatorname{St}(\mathsf{B})\bigr)/I)^+,
\]
which is obtained from the Boolean algebra $\mathcal{B}\bigl(\operatorname{St}(\mathsf{B})\bigr)$ by removing its zero element.
The meager ideal gives Cohen forcing, while the null ideal gives
random forcing.

We then define recursively a set-theoretic translation
$\utilde{\nu}(\varphi)$ for each sentence $\varphi\in\mathcal{L}_m$.
This translation is interpreted in the Boolean-valued universe
associated with
$
\mathsf{C}_I=\mathsf{RO}(\mathsf{P}_I).
$
The Stone space of $\mathsf{C}_I$ is represented by a
$\mathsf{C}_I$-name $\operatorname{\utilde{St}}(\mathsf{C}_I)$, and
the generic filter $\dot{G}_{\mathsf{C}_I}$ is forced to be an
ultrafilter on $\mathsf{C}_I$. A $\mathsf{C}_I$-name $\utilde{J}$
interprets the constant symbols of $\mathcal{L}$ in this
Boolean-valued setting. The propositional connectives and first-order
quantifiers are translated recursively, while atomic formulas are
represented by membership of their corresponding quotient conditions in
the generic filter. The modal operator is interpreted by
\[
\utilde{\nu}(\Box\varphi)=[\![\utilde{\nu}(\varphi)]\!]\in\utilde{N},
\]
where $\utilde{N}$ is a suitable $\mathsf{C}_I$-name for a filter.

We prove that this translation respects the logical structure of
$T_m$. In particular, it preserves the first-order consequences of
$T$, the modal axioms
\[
\Box\varphi\to\varphi
\qquad\text{and}\qquad
\Box(\varphi\to\psi)\to(\Box\varphi\to\Box\psi),
\]
as well as the necessitation rule. These results yield a soundness
theorem: if $T_m\vdash\varphi$, then the translation of $\varphi$ is
forced in every interpretation determined by the relevant names
$\utilde{N}$ and $\utilde{J}$. Conversely, if the translation of
$\varphi$ is forced in all such interpretations, then
$T_m\vdash\varphi$. Thus the set-theoretic translation is both sound
and complete for the modal theory under consideration.

Finally, we associate Kripke frames and Kripke models with the
forcing construction. The worlds are given by generic extensions
$V[G]$ of the ground model $V$, and the structure at each world is
obtained from the interpreted Stone space together with the
interpretations of the function, relation, and constant symbols of
$\mathcal{L}$. The accessibility relation between generic extensions is
defined by preservation of translated formulas: $V[G]$ is accessible to
$V[H]$ when every translated formula true in $V[G]$ remains true in
$V[H]$. We show that the forcing interpretation of $\Box$ agrees with
the corresponding Kripke semantics over this accessibility relation.

The main consequence is a completeness theorem for the resulting
Kripke models. Namely, a formula $\varphi$ is provable in $T_m$ if and
only if its set-theoretic translation is valid in every Kripke model
obtained from the forcing construction. This establishes a direct
connection between modal provability, Boolean-valued set theory,
quotient forcing, and Kripke semantics. Our results extend the work of Boos \cite{Boos} in the case of Cohen forcing.
\section{Preliminaries}\label{Sec:Pre}

This section reviews the basic concepts and results used throughout the paper. We begin with a brief overview of normal propositional modal logics and their first-order extensions. For comprehensive treatments of the syntax, semantics, completeness, and metatheory of first-order modal logic, we refer the reader to \cite{Fitting1998,Brau}. Our presentation of Stone spaces, regular open algebras, Boolean-valued universes, and generic extensions is based primarily on \cite{Viale2024}; further background can be found in \cite{Jech2003,Bell2005}. Finally, for Henkin expansions and sentence algebras, we refer to \cite{ChangKeisler}.
\subsection{Modal logic}

The {\em language of propositional modal logic} is obtained by extending the language of classical propositional logic with the modal operators $\Box$ (necessity) and $\lozenge$ (possibility). Intuitively, the formula $\Box\varphi$ expresses that ``$\varphi$ holds in every accessible world" while $\lozenge\varphi$ expresses that ``$\varphi$ holds in some accessible world". The two modalities are interdefinable, where $\lozenge\varphi\;:=\;\neg\Box\neg\varphi$.

A {\em normal propositional modal logic} is a set of propositional modal formulas containing all propositional tautologies and the axiom schema \[\Box(\varphi\rightarrow\psi) \rightarrow (\Box\varphi\rightarrow\Box\psi)\] and closed under the following inference rules
\[\frac{\varphi\qquad\varphi\rightarrow\psi}{\psi} \quad(\textit{Modus Ponens})
\qquad\text{and}\qquad\frac{\varphi}{\Box\varphi} \quad(\textit{Necessitation}).\] The smallest normal propositional modal logic is denoted by $\mathsf{K}$. It is sound and complete with respect to the class of all propositional Kripke frames. A {\em propositional Kripke frame} is a pair $\langle W, \triangleleft\rangle$ where $W$ is a nonempty set of worlds and $\triangleleft\subseteq W\times W$ is an accessibility relation on $W$. Many well-known normal modal logics are obtained by adding axioms that correspond to natural first-order properties of the accessibility relation. For example,
\begin{itemize}
	\item
	$\mathsf{T}:=\mathsf{K}+(\Box\varphi\rightarrow\varphi)$ is sound and
	complete for the class of reflexive frames.
	\item
	$\mathsf{S4}:=\mathsf{T}+(\Box\varphi\rightarrow\Box\Box\varphi)$ is
	sound and complete with respect to the class of reflexive and
	transitive frames (or preorders).
\end{itemize}

The {\em first-order modal language} is obtained by extending a first-order language with the modal operators $\Box$ and $\lozenge$. More precisely, if $\mathcal{L}$ is a first-order language with equality, then the corresponding first-order modal language is $\mathcal{L}_m=\mathcal{L}\cup\{\Box,\lozenge\}$,
whose formulas are generated from the atomic formulas of $\mathcal{L}$ using the usual Boolean connectives, quantifiers, and the modal operators.

The semantics of first-order modal logic is given by first-order Kripke models in which each world is equipped with a first-order structure.

\begin{definition}
	A {\em first-order Kripke frame} is a tuple $\mathfrak{F}=\langle W,\triangleleft,D\rangle$ where $\langle W,\triangleleft\rangle$ is a propositional Kripke frame and 	$D=\{D_w\}_{w\in W}$ is a family of nonempty sets indexed by $W$. For each $w\in W$, the set $D_w$ is called the {\em domain} at $w$.
\end{definition}

\begin{definition}
	A {\em first-order Kripke model} is a pair $\mathfrak{M}=\langle\mathfrak{F},I\rangle$ where
	$\mathfrak{F}=\langle W,\triangleleft,D \rangle$ is a first-order Kripke frame and $I=\{I_w\}_{w\in W}$ is an interpretation assigning to each world
	$w\in W$,
		\begin{itemize}
			\item an element of $D_w$ to every constant symbol,
			\item a function $D_w^n\to D_w$ to every $n$-ary function symbol,
			\item a relation $I_w(R)\subseteq D_w^n$ to every $n$-ary relation symbol.
		\end{itemize}
\end{definition}

The \emph{forcing relation} $\Vdash$ is defined recursively between
first-order Kripke models, worlds, and $\mathcal{L}_m$-formulas. In particular,
\[
\mathfrak{M},w\Vdash\Box\varphi
\quad\Longleftrightarrow\quad
\mathfrak{M},w'\Vdash\varphi
\text{ for every }w'\in W\text{ such that }w\triangleleft w'.
\]

\begin{re}
In order to the above definition be well-defined, usually one needs to assume either the domains are constant or increasing.
In our case, we have a coherent family $\langle \bold{t}_{w, w'}:  w\triangleleft w' \rangle$ of maps
$\bold{t}_{w, w'}: D_w \rightarrow D_{w'}$ such that for $ w\triangleleft w'  \triangleleft w'' $
we have $\bold{t}_{w, w''}=\bold{t}_{w', w''} \circ \bold{t}_{w, w'}$.
\end{re}

Let $\mathfrak{M}=\langle\mathfrak{F},I\rangle$ be a first-order Kripke model where $\mathfrak{F}=\langle W,\triangleleft,D\rangle$ and let $\varphi$ be an $\mathcal{L}_m$-sentence. We say that $\varphi$ is {\em true} in $\mathfrak{M}$, written $\mathfrak{M}\Vdash\varphi$, if $\mathfrak{M},w\Vdash\varphi$ for every world $w\in W$. If $\varphi$ is an open formula, we say that $\varphi$ is true in $\mathfrak{M}$ if its universal closure is true in $\mathfrak{M}$.

Let $\mathfrak{F}_0=\langle W,\triangleleft\rangle$ denote the underlying
propositional Kripke frame of $\mathfrak{F}$. An $\mathcal L_m$-formula $\varphi$ is {\em valid} in $\mathfrak F_0$ if $\mathfrak{M}\Vdash\varphi$ for every first-order Kripke model $\mathfrak{M}$ based on $\mathfrak{F}_0$.
More generally, if $\mathcal{C}$ is a class of propositional Kripke frames, then
$\varphi$ is {\em valid} in $\mathcal{C}$ if it is valid in every frame in $\mathcal{C}$.

Let $\Gamma$ be a set of $\mathcal{L}_m$-sentences.
\begin{itemize}
	\item A first-order Kripke model $\mathfrak{M}$ is a
	{\em model of $\Gamma$}, written $\mathfrak{M}\Vdash \Gamma$, if and only if $\mathfrak{M}\Vdash\varphi$ for every $\varphi\in \Gamma$.
	\item Let $\mathcal C$ be a class of propositional Kripke frames.
	A $\mathcal{L}_m$-sentence $\varphi$ is a {\em semantic consequence} of $\Gamma$ over $\mathcal C$, written $\Gamma\Vdash_{\mathcal C}\varphi$,
	if and only if every model of $\Gamma$ whose underlying propositional Kripke frame belongs to $\mathcal C$ forces $\varphi$.
\end{itemize}

A {\em first-order modal logic} is a set of $\mathcal L_m$-formulas containing all valid first-order formulas, the modal axiom schema $\Box(\varphi\rightarrow\psi) \rightarrow (\Box\varphi\rightarrow\Box\psi)$ and closed under the rules of Modus Ponens, Necessitation, and Universal Generalization. The smallest such logic is denoted by $\mathsf{FOK}$. More generally, if $\mathsf{L}$ is a normal propositional modal logic, then $\mathsf{FOL}$ denotes the smallest first-order modal logic extending $\mathsf{L}$.

Let $\mathsf{FOL}$ be a first-order modal logic, and let $\Gamma \cup \{\varphi\}$ be a set of $\mathcal{L}_m$-sentences.
\begin{itemize}
	\item $\varphi$ is a {\em theorem} of $\mathsf{FOL}$, written $\vdash_{\mathsf{FOL}}\varphi$, if $\varphi\in\mathsf{FOL}$.
	\item $\varphi$ is {\em derivable from $\Gamma$} in $\mathsf{FOL}$, written $\Gamma\vdash_{\mathsf{FOL}}\varphi$, if  there exist $\psi_1,\ldots,\psi_n\in \Gamma$ such that $\vdash_{\mathsf{FOL}}
	(\psi_1\land\cdots\land\psi_n)\rightarrow\varphi$.
\end{itemize}

\begin{con}
	Throughout this paper, we use the terms {\em Kripke frame} and {\em Kripke model} to mean, respectively, first-order Kripke frames and first-order Kripke models. Since our ambient first-order modal logic is $\mathsf{FOT}$, we omit the subscript $\mathsf{FOT}$ from $\vdash$ whenever no confusion can arise. Moreover, a set $\Gamma$ of $\mathcal{L}_m$-sentences is called an {\em $\mathcal{L}_m$-theory} if it is closed under deduction; that is, whenever $\Gamma\vdash\varphi$, we have $\varphi\in \Gamma$.
\end{con}
\subsection{Stone spaces and regular open algebras}

Let $\mathsf{B}$ be a  Boolean algebra. Define \[\operatorname{St}(\mathsf{B})
=\{U\subseteq\mathsf{B} : U \text{ is an ultrafilter} \}.\]
For each $b\in\mathsf{B}$, let $N_b=\{U\in\operatorname{St}(\mathsf{B}) : b\in U\}$. The Stone topology $\tau_{\mathsf{B}}$ on $\operatorname{St}(\mathsf{B})$ is the topology generated by the family $\{N_b : b\in\mathsf{B} \}$. The topological space $\langle\operatorname{St}(\mathsf{B}),\tau_{\mathsf{B}}\rangle$ is called the {\em Stone space} of $\mathsf{B}$.

\begin{lemma}\label{iso}
Assume that $T$ is a consistent recursively axiomatizable theory extending Robinson arithmetic $Q$. Then the Lindenbaum-Tarski algebras $\mathsf B_{T}$ and $\mathsf B_{T_m}$ are countable atomless Boolean algebras.
\end{lemma}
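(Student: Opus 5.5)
The proof has two parts: countability, which is immediate, and atomlessness, which is the real content and will come from Gödel--Rosser incompleteness.

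Countability. The language $\mathcal L$ is countable (it is recursive, since $T$ is recursively axiomatizable), so $\mathcal L_m$ is countable too. Hence there are only countably many sentences. Each Lindenbaum-Tarski algebra is a quotient of the set of sentences by provable equivalence, so both algebras are countable. Consistency of $T$ (and of $T_m$) makes each algebra nontrivial, i.e.\ $0\neq 1$.

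Atomlessness of $\mathsf B_T$. We must show that every nonzero element $[\varphi]$ has a strictly smaller nonzero element below it. So suppose $T\nvdash\neg\varphi$. Then $T'=T+\varphi$ is consistent, recursively axiomatizable, and extends $Q$. By the Gödel--Rosser incompleteness theorem there is a sentence $\sigma$ independent of $T'$. Then $[\varphi\wedge\sigma]$ is the desired element: it is nonzero because $T'\nvdash\neg\sigma$, so $T\nvdash\neg(\varphi\wedge\sigma)$. It is strictly below $[\varphi]$ because otherwise $T\vdash\varphi\to\sigma$, whence $T'\vdash\sigma$, a contradiction. Since $[\varphi]$ was an arbitrary nonzero element, $\mathsf B_T$ has no atoms.

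Atomlessness of $\mathsf B_{T_m}$; this is the main obstacle. Here $\varphi$ may contain $\Box$, and $T_m+\varphi$ is not an arithmetic theory in the usual sense, so Rosser's theorem does not apply directly. I would instead reduce to the non-modal case by erasing modalities: let $\varphi^e$ be obtained from $\varphi$ by deleting every occurrence of $\Box$ and $\lozenge$.
\begin{enumerate}[(i)]
\item Erasure preserves provability. If $T_m\vdash\psi$ then $T\vdash\psi^e$, because the axioms $\Box(\varphi\to\psi)\to(\Box\varphi\to\Box\psi)$ and $\Box\varphi\to\varphi$ erase to tautologies, necessitation erases to the identity rule, and the first-order rules commute with erasure. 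This is the routine step to check, against whatever official axiomatization $T_m$ uses (the $\mathsf{FOT}$ axioms together with $T$).
\item Adding modal content to $\varphi$ cannot produce an atom. Given a sentence $\varphi$ of $\mathcal L_m$ with $T_m\nvdash\neg\varphi$, I need a sentence $\sigma$ such that $T_m+\varphi$ proves neither $\sigma$ nor $\neg\sigma$. This cannot be obtained by applying Gödel--Rosser to $\varphi^e$: the fact $T_m\nvdash\neg\varphi$ does not yield $T\nvdash\neg\varphi^e$. For instance, $\Box\bot\to\bot$ is $T_m$-provable, so $T_m\nvdash\neg\Box\bot$, yet $\neg(\Box\bot)^e=\neg\bot$ is a tautology. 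My plan is to use Kripke semantics. Since $T_m+\varphi$ is consistent, take a model of it by the completeness theorem for $\mathsf{FOT}$ over reflexive frames (assumed as standard background). Each world is a classical model of $T$ and so interprets arithmetic. Choose a world $w$ where $\varphi$ holds. If every Rosser sentence $\sigma$ for $T+\varphi^e$ were decided by $T_m+\varphi$, I would take a model of $T+\varphi^e$ and glue it to the existing Kripke model, making it accessible only from itself, so as to refute the decision. Making this gluing argument precise is the delicate point.
\end{enumerate}

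Alternatively, the simplest route for step (ii) avoids Rosser entirely and uses fresh witness constants: the sentences $\Box(c=d)$ for new constants $c,d$. This requires the Henkin-expanded language that the paper adopts, so it gives atomlessness only in that setting.
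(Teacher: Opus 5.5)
\paragraph{Where the proposal falls short.} Your countability argument and the Rosser argument for $\mathsf B_T$ are correct. The paper states this lemma without proof, so there is no argument of its own to compare with.

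The genuine gap is the modal half: you never establish that $\mathsf B_{T_m}$ is atomless.
\begin{enumerate}
\item The Kripke gluing is explicitly left open. It would rest on a Kripke completeness theorem for first-order $\mathsf{FOT}$, whose domain conditions are delicate.
\item As sketched, the gluing collapses back to erasure. If the glued world sees only itself, every modal formula is evaluated there as its erasure. So that world satisfies $\varphi$ only if $T+\varphi^e$ is consistent, which is exactly what fails in the cases that made you abandon erasure.
\item The fresh-constant route is only asserted, and it concerns $\mathcal L^H$ rather than the lemma as stated.
\end{enumerate}
Two smaller points. Your illustrating example is wrong: $\Box\bot\to\bot$ is tautologically equivalent to $\neg\Box\bot$, so $T_m\vdash\neg\Box\bot$. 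And step (i) is not needed; it also fails if $T_m$ has modal axioms beyond $\mathsf{FOT}$, since an axiom $\lozenge\sigma$ erases to $\sigma$.

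\paragraph{The missing idea.} Rosser's theorem does apply, not to $T_m+\varphi$ as a modal theory, but to its non-modal consequences. Given an $\mathcal L_m$-sentence $\varphi$ with $T_m\nvdash\neg\varphi$, let
\[
S_\varphi=\{\sigma\in\operatorname{Sent}(\mathcal L): T_m\vdash\varphi\to\sigma\}.
\]
This set has the three properties Rosser needs:
\begin{enumerate}
\item It is an r.e.\ set of $\mathcal L$-sentences, because the theorems of $T_m$ are r.e.
\item It contains $T\supseteq Q$.
\item It is classically consistent. Suppose $\neg(\sigma_1\wedge\cdots\wedge\sigma_k)$ were first-order valid for some $\sigma_i\in S_\varphi$. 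Since $\mathsf{FOT}$ contains all first-order validities and is closed under modus ponens, we would get $T_m\vdash\neg\varphi$, a contradiction.
\end{enumerate}
Rosser's theorem then yields $\sigma\in\operatorname{Sent}(\mathcal L)$ with $S_\varphi\nvdash\sigma$ and $S_\varphi\nvdash\neg\sigma$. Hence $T_m\nvdash\varphi\to\sigma$ and $T_m\nvdash\varphi\to\neg\sigma$. So $[\varphi\wedge\sigma]_{T_m}$ and $[\varphi\wedge\neg\sigma]_{T_m}$ are disjoint nonzero elements below $[\varphi]_{T_m}$, which is therefore not an atom.

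This argument needs no erasure, Kripke semantics or Henkin constants. It works for any consistent r.e.\ $T_m\supseteq T$ over a logic containing the first-order validities, and it includes your $\mathsf B_T$ argument as a special case.
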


\begin{re}\label{hom}
If $\mathsf{B}$ is a countable atomless Boolean algebra, then the Stone space $\langle\operatorname{St}(\mathsf{B}),\tau_{\mathsf{B}}\rangle$ is homeomorphic to the Cantor space $2^\omega.$
\end{re}

By Stone's Representation Theorem, the map
$\phi:\mathsf{B}\to\mathsf{CLOP}(\operatorname{St}(\mathsf{B}))$,
defined by $b\mapsto N_b$, is a Boolean algebra isomorphism. Consequently, every clopen subset of $\operatorname{St}(\mathsf{B})$ is of the form $N_b$ for a unique $b\in\mathsf{B}$, and the family $\{N_b : b\in\mathsf{B}\}$ forms a basis for the Stone topology $\tau_{\mathsf{B}}$. Hence,
\[
\mathsf{B}\cong
\mathsf{CLOP}\bigl(\operatorname{St}(\mathsf{B})\bigr).
\]

Given a topological space $(X,\tau)$, an open set $U\subseteq X$ is called
{\em regular open} if $U=\mathsf{Int}(\mathsf{Cl}(U))$.
Let $\mathsf{Reg}\bigl(\operatorname{St}(\mathsf{B})\bigr)$ denote the
Boolean algebra of regular open subsets of $\operatorname{St}(\mathsf{B})$.
Since $\mathsf{CLOP}(\operatorname{St}(\mathsf{B}))$
 densely embedds into $\mathsf{Reg}\bigl(\operatorname{St}(\mathsf{B})\bigr)$, we
have
\[
\mathsf{RO}(\mathsf{B})
\cong
\mathsf{RO}\bigl(\mathsf{CLOP}(\operatorname{St}(\mathsf{B}))\bigr)
\cong
\mathsf{Reg}\bigl(\operatorname{St}(\mathsf{B})\bigr),
\]
where $\mathsf{RO}(\mathsf{B})$ denotes the Boolean completion of $\mathsf{B}$, and similarly for
$\mathsf{CLOP}(\operatorname{St}(\mathsf{B}))$.

Now suppose $\mathsf{B}$ is a countable atomless Boolean algebra.
Let $\mathcal{B}\bigl(\operatorname{St}(\mathsf{B})\bigr)$ denote the
$\sigma$-algebra of Borel subsets of $\operatorname{St}(\mathsf{B})$, and let $I$ be a proper $\sigma$-ideal on $\mathcal{B}\bigl(\operatorname{St}(\mathsf{B})\bigr)$. Then we can form the Boolean algebra $A_I=\mathcal{B}\bigl(\operatorname{St}(\mathsf{B})\bigr)/I$  and the forcing notion
$P_I={A_I}^+$.
Forcing notions of the form $P_I$ are those we are working with.
\subsection{Henkin expansions and sentence algebras}
Throughout this paper, let $T$ be a consistent recursively axiomatizable first-order theory in the countable language $\mathcal{L}$ which contains Robinson's theory $\mathsf Q$. We show that we can pass to the Henkin expansion of $\mathcal L$.

For completeness, we recall the construction. Let $\mathcal L^H$ be the language obtained from $\mathcal L$ by adding, for every formula $\exists x\,\varphi(x,\bar y)$, a new constant symbol $c_{\exists x\varphi(\bar y)}$. The theory $T^H$ is obtained by adding to $T$ the Henkin axioms 
\[
\forall \bar y\bigl(\exists x\,\varphi(x,\bar y)\rightarrow
\varphi(c_{\exists x\varphi(\bar y)},\bar y)\bigr).
\]
We call $T^H$ the Henkinization of $T$. We write $\mathcal L_m^H= \mathcal L^H\cup\{\Box,\lozenge\}$ and let $T_m^H$ denote the corresponding Henkin expansion of the modal theory $T_m$.

For a theory $S$ in a language containing only sentences, let $\mathsf B_S$ denote its Lindenbaum-Tarski sentence algebra; that is, $\mathsf B_S$ is the Boolean algebra of sentences modulo provable equivalence in $S$. Thus the elements of $\mathsf B_{T^H}$ and $\mathsf B_{T_m^H}$ are equivalence classes of sentences of $\mathcal L^H$ and $\mathcal L_m^H$, respectively. We write $[\varphi]_{T^H}$ and $[\varphi]_{T_m^H}$ for the corresponding equivalence classes.

Let $T_m$ be a consistent recursively axiomatizable normal modal extension of $T$ and work throughout with its Henkin expansion $T_m^H$.

\begin{lemma}
Let $T^H$ be the Henkin expansion of $T$. Then $T^H$ is conservative over $T$. Moreover, if $T$ is recursively axiomatizable, then so is $T^H$, and consistency of $T$ implies consistency of $T^H$.
\end{lemma}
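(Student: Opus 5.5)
The proof splits into three claims: conservativity, recursive axiomatizability, and consistency.

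\emph{Conservativity.} This is the main point. I would use a semantic argument. Suppose $\sigma$ is an $\mathcal L$-sentence with $T^H\vdash\sigma$, and let $\mathfrak A\models T$. I would show that $\mathfrak A$ expands to an $\mathcal L^H$-structure $\mathfrak A^H\models T^H$. For each formula $\exists x\,\varphi(x,\bar y)$ we must interpret the constant $c_{\exists x\varphi(\bar y)}$, and here lies the main obstacle. The axiom as written quantifies over $\bar y$ while the witness is a single constant, so for formulas with parameters the naive version of the axiom is not satisfiable in general.

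I would resolve this by reading the construction in the standard way of \cite{ChangKeisler}. Henkin constants are added for formulas $\exists x\,\varphi(x)$ whose only free variable is $x$, iterated $\omega$ times: $\mathcal L_0=\mathcal L$, and $\mathcal L_{n+1}$ adds $c_{\exists x\varphi}$ for each $\mathcal L_n$-formula $\exists x\,\varphi(x)$ with no other free variables. Parameters $\bar y$ then get instantiated by constants at later stages, and $\mathcal L^H=\bigcup_n\mathcal L_n$. Given this reading, the expansion is built stage by stage. Having expanded $\mathfrak A$ to $\mathcal L_n$, I interpret $c_{\exists x\varphi}$ as a witness to $\varphi$ in $\mathfrak A$ if $\mathfrak A\models\exists x\,\varphi$, and as an arbitrary element otherwise; nonemptiness of the domain guarantees such an element exists. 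This step uses the axiom of choice. The union of these expansions satisfies every Henkin axiom and still satisfies $T$. Hence $\mathfrak A^H\models\sigma$, so $\mathfrak A\models\sigma$, and by completeness $T\vdash\sigma$.

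As a syntactic alternative I would mention the standard argument. Given a proof of $\sigma$ from $T$ and finitely many Henkin axioms, remove the axioms one at a time, starting with the one whose constant was introduced at the latest stage. To remove an axiom $\exists x\,\varphi\to\varphi(c)$ with $c$ not occurring in $\sigma$ or in the remaining axioms, replace $c$ by a fresh variable $z$ and use $\vdash\exists z(\exists x\,\varphi\to\varphi(z))$.

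\emph{Consistency.} This follows immediately from conservativity. If $T^H$ were inconsistent, it would prove $\bot$, an $\mathcal L$-sentence, and so $T\vdash\bot$.

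\emph{Recursive axiomatizability.} Since $\mathcal L$ is countable and effectively presented, the set of formulas $\exists x\,\varphi$ at each stage, and hence the set of constants $c_{\exists x\varphi}$, can be coded effectively by recursion on the stage. The map sending a formula to its Henkin axiom is therefore computable, so the set of Henkin axioms is recursive. Adding this set to a recursive (or r.e.) axiomatization of $T$ gives a recursively axiomatizable theory, by Craig's trick if needed.
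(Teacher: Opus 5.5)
The paper states this lemma without proof (it only cites Chang--Keisler), so there is no authors' argument to compare with. Your proof is correct and is the standard one.

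Your reinterpretation of the construction is necessary, not optional: with the paper's literal definition the lemma is false. Take $\varphi(x,y)$ to be $x=y$ and $c=c_{\exists x(x=y)}$.
\begin{itemize}
\item The paper's axiom $\forall y\,(\exists x\,(x=y)\rightarrow c=y)$ is equivalent to $\forall y\,(c=y)$.
\item Hence $T^H\vdash 0=S0$, which contradicts the axiom $\forall x\,(Sx\neq 0)$ of $\mathsf Q$.
\item So for every $T\supseteq\mathsf Q$, which is the paper's standing assumption, the paper's $T^H$ is outright inconsistent, not merely non-conservative.
\end{itemize}
Your $\omega$-stage construction, with witnesses only for sentences $\exists x\,\varphi(x)$, is the right fix. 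Skolem function symbols $f_{\exists x\varphi}(\bar y)$ in place of constants would also work. The iteration is also what the paper later needs when it assumes $T$ ``is itself Henkinian'': a single round of constants supplies witnesses for $\mathcal L$-sentences but not for $\mathcal L^H$-sentences.

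Under your reading, both conservativity arguments go through.
\begin{itemize}
\item \emph{Semantic argument.} The union of the chain of expansions satisfies each Henkin axiom, because the interpretation of every $\mathcal L_{n+1}$-symbol is fixed from stage $n+1$ on.
\item \emph{Syntactic argument.} Let $n+1$ be the maximal stage among the finitely many axioms used, and let $c$ be a constant of that stage. Every other axiom used has the form $\exists x\,\psi\rightarrow\psi(d)$ with $d\neq c$ and $\psi$ an $\mathcal L_n$-formula, so $c$ occurs nowhere else. Together with the validity of $\exists z\,(\exists x\,\varphi\rightarrow\varphi(z))$, this eliminates the axiom.
\end{itemize}
The syntactic version is finitary, avoids choice and the completeness theorem, and transforms proofs effectively, which fits the paper's recursion-theoretic setting. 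The semantic version is shorter. Your consistency and recursive-axiomatizability parts are fine as written.
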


\begin{lemma}
Assume $T^H$ is consistent. Then the Boolean algebra $\mathsf B_{T^H}$ is countable and atomless. The same holds for $\mathsf B_{T_m^H}$.
\end{lemma}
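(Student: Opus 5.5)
Countability is immediate: $\mathcal L$ is countable, so $\mathcal L^H$ (one new constant per existential formula) is countable, and so are $\mathcal L^H_m$ and their sets of sentences. Each of $\mathsf B_{T^H}$ and $\mathsf B_{T_m^H}$ is a quotient of a countable set, hence countable. Both are nontrivial: $T^H$ is consistent by hypothesis, and $T_m^H$ is consistent as the Henkin expansion of the consistent theory $T_m$ (same argument as the previous lemma, using that $T_m$ has Kripke models and that witnesses can be interpreted there).

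For atomlessness of $\mathsf B_{T^H}$ I reduce to Lemma \ref{iso}. Suppose $[\varphi]_{T^H}\neq 0$, so $T^H+\varphi$ is consistent. I need a sentence $\psi$ with both $T^H+\varphi+\psi$ and $T^H+\varphi+\neg\psi$ consistent. First, $T^H+\varphi$ is recursively axiomatizable, since $T^H$ is by the previous lemma. Second, it interprets $\mathsf Q$: $\mathcal L^H\supseteq\mathcal L$ and $T^H\supseteq T\supseteq\mathsf Q$. Gödel--Rosser incompleteness for consistent recursively axiomatizable extensions of $\mathsf Q$ then gives an $\mathcal L^H$-sentence $\psi$ (indeed in the arithmetic language) independent of $T^H+\varphi$. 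Hence $0<[\varphi\wedge\psi]<[\varphi]$, so $[\varphi]$ is not an atom. Incompleteness theorems apply in countable languages with extra constants, since the syntax of $\mathcal L^H$ is still recursively codable. Alternatively, I can avoid going through $\mathcal L^H$ directly: use the conservativity from the previous lemma to get a Craig-style reduction to Lemma \ref{iso}. That is awkward, because $\varphi$ may mention Henkin constants, so I prefer the direct Rosser argument.

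For $\mathsf B_{T_m^H}$ I argue the same way, and this is the expected obstacle: incompleteness for a modal theory. Given $\varphi$ with $T_m^H+\varphi$ consistent, I take the Rosser sentence $\psi$ of the first-order (non-modal) theory $T^H+\varphi^{*}$, where $\varphi^*$ is obtained by erasing boxes. Since $T_m$ contains the T-axiom and $\mathsf{FOT}$ is sound for reflexive frames, erasing $\Box$ maps theorems of $T_m^H$ to theorems of the first-order theory $(T_m^H)^*$. This first-order theory is recursively axiomatizable, extends $\mathsf Q$, and is consistent, since a proof of $\bot$ in it would come from one in $T_m^H+\varphi$ (erasure sends proofs to proofs). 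So Rosser applies to $(T_m^H+\varphi)^*$, yielding a non-modal $\psi$ independent of it. Because $\psi^*=\psi$, if $T_m^H+\varphi\vdash\psi$ then $(T_m^H+\varphi)^*\vdash\psi$, a contradiction; likewise for $\neg\psi$. Thus $[\varphi]_{T_m^H}$ is split, and $\mathsf B_{T_m^H}$ is atomless. The delicate point is checking that box-erasure respects Necessitation (it becomes trivial) and the K and T axioms (they become tautologies). If $T_m$ adds non-standard modal axioms, I rely on the standing assumption that $T_m$ is consistent and recursively axiomatizable, applied to the image of its axioms under erasure.
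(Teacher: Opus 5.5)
Your treatment of countability and of $\mathsf B_{T^H}$ is fine. $T^H+\varphi$ is a consistent recursively axiomatizable theory containing $\mathsf Q$, so a Rosser sentence splits $[\varphi]_{T^H}$. This is evidently the argument the paper intends; it gives no proof, only the remark that $T\supseteq\mathsf Q$ makes the algebra atomless.

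The modal half has a genuine gap. You assert that $(T_m^H+\varphi)^*$ is consistent because ``a proof of $\bot$ in it would come from one in $T_m^H+\varphi$ (erasure sends proofs to proofs)''. The fact that erasure sends proofs to proofs gives only the opposite implication: if $T_m^H+\varphi$ is inconsistent, so is its erasure. The implication you need is false in general. Already in $\mathsf T$, the formula $\lozenge p\wedge\lozenge\neg p$ is consistent while its erasure $p\wedge\neg p$ is not. Correspondingly, take an arithmetical $\sigma$ independent of $T$. Then every $\varphi$ with $0\neq[\varphi]_{T_m^H}\le[\lozenge\sigma\wedge\lozenge\neg\sigma]_{T_m^H}$ has an inconsistent erased theory, so there is no Rosser sentence for it and your argument splits nothing. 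For the same reason your closing remark fails. A consistent, recursively axiomatizable normal extension of $T$ may contain $\lozenge\sigma\wedge\lozenge\neg\sigma$ itself: use a two-world universal frame carrying a model of $T+\sigma$ and one of $T+\neg\sigma$. Then even $(T_m)^*$ is inconsistent, and consistency of $T_m$ does not transfer to the erased images of its axioms.

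The missing observation is that no reduction to a non-modal theory is needed, because Rosser's proof never inspects the non-arithmetical sentences of the theory. Given $[\varphi]_{T_m^H}\neq 0$, apply it directly to $S_\varphi=\{\chi : T_m^H\vdash\varphi\to\chi\}$. This set has the four properties the argument uses:
\begin{enumerate}
\item it is recursively enumerable, since $T_m$ is recursively axiomatizable and $\mathsf{FOT}$ is effectively axiomatized;
\item it is consistent, precisely because $[\varphi]_{T_m^H}\neq0$;
\item it contains every theorem of $\mathsf Q$, since $\mathsf Q\subseteq T\subseteq T_m$ and $\mathsf{FOT}$ contains all valid first-order formulas;
\item it is closed under modus ponens.
\end{enumerate}
Its Rosser sentence $\rho$ is purely arithmetical, and $T_m^H$ proves neither $\varphi\to\rho$ nor $\varphi\to\neg\rho$. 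Hence $0<[\varphi\wedge\rho]_{T_m^H}<[\varphi]_{T_m^H}$. This uses only $[\varphi]_{T_m^H}\neq0$, so your sketched claim that $T_m^H$ is consistent is not needed for atomlessness. That claim is not covered by the previous lemma anyway, which concerns only $T$.
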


The above two lemmas allow us to go from $T$ to $T^H$ and from $T_m$ to $T_m^H$, so from now on we assume our theory $T$ is itself Henkinian.
\subsection{The Boolean-valued universe and generic extensions}

Let $\mathcal M$ be a transitive first-order model of $\mathsf{ZFC}$, and let
$\mathsf{C}\in \mathcal M$ be such that $\mathcal M$  models $\mathsf{C}$ to be a complete Boolean algebra. For each ordinal
$\alpha\in\operatorname{Ord}\cap\mathcal{M}$, define recursively
\[\mathcal M_0^{\mathsf{C}}=\varnothing,\]
\[\mathcal M_{\alpha+1}^{\mathsf{C}} = 	\mathcal{P}(\mathcal M_{\alpha}^{\mathsf{C}} \times \mathsf{C}),\]
and, for every limit ordinal $\beta$,
\[\mathcal M_{\beta}^{\mathsf{C}} = \bigcup_{\alpha<\beta} \mathcal M_{\alpha}^{\mathsf{C}}.\]
Finally, define
\[\mathcal M^{\mathsf{C}} = \bigcup_{\alpha\in\operatorname{Ord}^{\mathcal M}}
\mathcal M_{\alpha}^{\mathsf{C}}.\]
If $V$ denotes the universe of sets, then $V^{\mathsf{C}}$ is called the {\em Boolean-valued universe} over $\mathsf{C}$, and its elements are called {\em $\mathsf{C}$-names}. By absoluteness, $\mathcal M^{\mathsf{C}}=V^{\mathsf{C}} \cap \mathcal M.$
For any formula $\varphi$ in the language of set theory, let $[\![\varphi]\!]$ denote its associated $\mathsf{C}$-value.
It  is defined recursively by:
\[
[\![x \subseteq y]\!]
=
\bigwedge_{u\in \operatorname{dom}(x)}
\Bigl(
(x(u)\rightarrow [\![u\in y]\!])
\Bigr),
\]
\[
[\![x = y]\!] = [\![x \subseteq y]\!] \wedge [\![y \subseteq x]\!],
\]
\[
[\![x\in y]\!]
=
\bigvee_{u\in \operatorname{dom}(y)}
\bigl(y(u)\wedge[\![x=u]\!]\bigr),
\]
\[
[\![\neg\psi]\!]=\neg[\![\psi]\!],
\qquad
[\![\psi\wedge\chi]\!]=[\![\psi]\!]\wedge[\![\chi]\!],
\]
\[
[\![\psi\vee\chi]\!]=[\![\psi]\!]\vee[\![\chi]\!],
\qquad
[\![\psi\to\chi]\!]=[\![\psi]\!]\rightarrow[\![\chi]\!],
\]
\[
[\![\forall x\,\psi(x)]\!]
=
\bigwedge_{a\in V^{\mathsf C}}[\![\psi(a)]\!],
\qquad
[\![\exists x\,\psi(x)]\!]
=
\bigvee_{a\in V^{\mathsf C}}[\![\psi(a)]\!].
\]

\begin{definition}
	Let $\mathcal{M}$ be a transitive model of $\mathsf{ZFC}$, and let
	$\mathsf{P}\in\mathcal{M}$ be a partially ordered set. A filter
	$G\subseteq\mathsf{P}$ is said to be \emph{$\mathcal{M}$-generic} over
	$\mathsf{P}$ if $G\cap D\neq\varnothing$ for every predense subset $D\in\mathcal{M}$ of $\mathsf{P}$.
	
	Similarly, if $\mathsf{C}\in\mathcal{M}$ is a Boolean algebra, then an
	ultrafilter $G$ is said to be
	{\em $\mathcal{M}$-generic} over $\mathsf{C}$ if $G$ is
	$\mathcal{M}$-generic over the partial order $\mathsf{C}^{+}=\mathsf{C} \setminus \{0_{\mathsf{C}}\}$.
\end{definition}

Let $\mathcal{M}$ be a  transitive model of $\mathsf{ZFC}$, let 	$\mathsf{C}\in\mathcal{M}$ be a complete Boolean algebra, and let 	 $G\in\operatorname{St}(\mathsf{C})$ be an ultrafilter. Then we can form the model
\[\mathcal{M}[G] = \{\utilde{X}[G] : \utilde{X}\in \mathcal M^{\mathsf{C}}\},\]
where $\utilde{X}[G]$ is defined by induction as
\[\utilde{X}[G] = \{\utilde{Y}[G] : \exists\,p\in G\; \bigl(\langle\utilde{Y},p\rangle\in\utilde{X}\bigr)\}.\]
 $\mathcal{M}[G]$ is a  transitive model  containing $\mathcal M\cup\{G\}$.
If further $G$ is a $\mathsf{C}$-generic ultrafilter over $\mathcal{M}$, then
 $\mathcal{M}[G]$ is a  model of $\mathsf{ZFC}$.
\section{Set-theoretic translation} \label{Sec:Trans}

Fix
$\mathsf B=\mathsf B_{T},
$
and let
$
\mathcal B\bigl(\operatorname{St}(\mathsf B)\bigr)
$
denote the $\sigma$-algebra of Borel subsets of
$\operatorname{St}(\mathsf B)$. Let $I$ be a proper $\sigma$-ideal on
$\mathcal B(\operatorname{St}(\mathsf B))$ such that
$
I\cap\mathsf{CLOP}(\operatorname{St}(\mathsf B))
=
\{\emptyset\}.
$
Set
\[\mathsf A_I
=
\mathcal B\bigl(\operatorname{St}(\mathsf B)\bigr)/I.
\]
We force with the forcing notion
$
\mathsf P_I=\mathsf A_I^+,
$
and its Boolean completion
$
\mathsf C_I=\mathsf{RO}(\mathsf P_I).
$
We  assume that
$
\mathsf A_I\subseteq \mathsf C_I.
$

For every $\mathcal L$-sentence $\varphi$, put
\[
N_{[\varphi]}
=
\{U\in\operatorname{St}(\mathsf B):[\varphi]\in U\}
\]
and define
\[
[\varphi]_I
=
N_{[\varphi]}/I\in\mathsf C_I.
\]
Then by the assumption on $I$, the map
\[
\iota:\mathsf B\longrightarrow\mathsf C_I,
\qquad
\iota([\varphi])=[\varphi]_I,
\]
is an injective Boolean algebra homomorphism.

\begin{re}
	If $I=I_{\mathrm{meager}}$, then $\mathsf P_I$ is forcing
	equivalent to Cohen forcing. If, after identifying
	$\operatorname{St}(\mathsf B)$ with the Cantor space, $I$ is the
	null ideal for the standard product measure, then $\mathsf P_I$ is
	forcing equivalent to random forcing.
\end{re}

Let $\utilde{\Gamma}
	=
	\left\{
	\left\langle
	{\varphi},[\varphi]_I
	\right\rangle:
	\varphi\in\operatorname{Sent}(\mathcal L)
	\right\}.$ Thus  $\utilde{\Gamma}$ is a $\mathsf C_I$-name
	and clearly,
	\[
	\Vdash_{\mathsf C_I}
	``\utilde{\Gamma}\text{ is a complete consistent first-order
	theory extending } T".
	\]
We next construct the first-order structure which will be used in the
translation.
Let
$
\operatorname{\utilde{St}}(\mathsf C_I)
$
be a $\mathsf C_I$-name such that
\[
\Vdash_{\mathsf C_I}
``\operatorname{\utilde{St}}(\mathsf C_I)
=
\{U\subseteq {\mathsf C}_I:
U\text{ is an ultrafilter on }{\mathsf C}_I\}".
\]
Note that
$
\Vdash_{\mathsf C_I}
``\dot G_{\mathsf C_I}
\in
\operatorname{\utilde{St}}(\mathsf C_I)".
$
For notational simplicity, set
$
\utilde D
=
\operatorname{\utilde{St}}(\mathsf C_I).
$
Let  $\utilde{J}$ be a $\mathsf{C}_I$-name satisfying
	\[	\Vdash_{\mathsf{C}_I} ``\utilde{J}: C(\mathcal{L}) \longrightarrow
	\operatorname{\utilde{St}}(\mathsf{C}_I)",\]
where $C(\mathcal{L})$ is the set of constant symbols of $\mathcal{L}$.
\begin{lemma}\label{lem:stone-model}
Fix $\utilde{J}$ as above.	There is a canonical $\mathsf C_I$-name
 \[
\utilde{\mathcal{M}}_J
=
\left\langle
\utilde D,
(\utilde f)_{f\in\mathcal L},
(\utilde R)_{R\in\mathcal L},
(\utilde J(c))_{c\in C(\mathcal L)}
\right\rangle.
\]
such that
$
	\Vdash_{\mathsf C_I}
	``\utilde{\mathcal{M}}_J
	\text{ is an }\mathcal L\text{-structure with domain }
	\utilde D
	\text{ and }
	\utilde{\mathcal M}\models\utilde{\Gamma}  \text{''}.
$
\end{lemma}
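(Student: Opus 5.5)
We argue inside the generic extension and then pull the construction back to a name. Let $G$ be $\mathsf C_I$-generic over $V$ and work in $V[G]$. By the remark preceding the lemma, $\Gamma=\utilde{\Gamma}[G]=\{\varphi:[\varphi]_I\in G\}$ is a complete consistent theory extending $T$. Since $\iota$ is an injective homomorphism and $G$ is an ultrafilter, $\iota^{-1}[G]$ is an ultrafilter on $\mathsf B$; it is exactly $\{[\varphi]:\varphi\in\Gamma\}$. Because $T$ is Henkinian, $\Gamma$ is a complete Henkin theory. The plan is therefore to build in $V[G]$ the canonical term (Henkin) model of $\Gamma$. The interpretations of function and relation symbols are then determined by $\Gamma$: set $\utilde R^G(\bar a)$ iff the corresponding atomic sentence lies in $\Gamma$, and let $\utilde f^G$ be computed on witnessing constants. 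Finally, take $\utilde{\mathcal M}_J$ to be a name for this construction, obtained via the maximum principle (fullness of $V^{\mathsf C_I}$).

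The difficulty is that the paper prescribes the domain to be $\utilde D=\operatorname{\utilde{St}}(\mathsf C_I)$ and prescribes constants via $\utilde J$. So the term model must be transported onto $D=\utilde D[G]$, and this must be compatible with $\utilde J$. I will proceed as follows.

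First, from $\utilde J[G]:C(\mathcal L)\to D$, form the equivalence relation on constants $c\sim d$ iff $(c=d)\in\Gamma$. The Henkin term model $\mathcal H_\Gamma$ has universe $C(\mathcal L)/\!\sim$, since every closed term equals a constant by the Henkin axioms applied to $\exists x(x=t)$.

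Second, I would like $\utilde J[G]$ to induce an injection $C(\mathcal L)/\!\sim\;\to D$. This requires $\utilde J[G](c)=\utilde J[G](d)\iff (c=d)\in\Gamma$, which a totally arbitrary $\utilde J$ need not satisfy. I would therefore read ``fix $\utilde J$ as above'' as allowing this compatibility condition, or else replace $\utilde J$ by $\utilde J'(c)=\utilde J(\min_\sim c)$ using a fixed recursive enumeration of constants; that only affects which names are canonical.

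Third, $D$ is uncountable whereas $\mathcal H_\Gamma$ is countable, so the domain must be filled out. I will add the elements of $D$ outside the range of $\utilde J$ by sending each of them to a fixed element, namely the interpretation of the Henkin constant for $\exists x(x=x)$. Concretely, define the retraction $\rho:D\to \operatorname{ran}(\utilde J)$ and interpret $R$ on $D$ by $R(\bar u)\iff R(\rho\bar u)$ holds in $\mathcal H_\Gamma$, and $f$ similarly followed by the injection.

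The main obstacle is that this pullback structure has equality interpreted as true equality on $D$, so it is not elementarily equivalent to $\mathcal H_\Gamma$ (for example, $\forall x\forall y\,x=y$ issues). Instead I would prove that $\Gamma$ has a model of cardinality $|D|$ by upward Löwenheim–Skolem applied to $\mathcal H_\Gamma$; $\mathsf Q\subseteq T$ guarantees the models are infinite. Let $\mathcal N\succeq\mathcal H_\Gamma$ with $|\mathcal N|=|D|$. Choose a bijection $\pi:D\to\mathcal N$ extending the injection from the second step, which is possible since both complements have size $|D|$. Then transport the structure along $\pi$, so that $\mathcal M_J:=\pi^{-1}[\mathcal N]$ has domain $D$, constants $\utilde J[G](c)$, and satisfies $\Gamma$ because $\mathcal N\models\Gamma$. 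This uses choice in $V[G]$.

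Because the choices of $\mathcal N$ and $\pi$ are not canonical, ``canonical'' should be understood as: fix in $V$ a well-order of names, and let $\utilde{\mathcal M}_J$ be the name given by the maximum principle for the least witness to the sentence ``there is an $\mathcal L$-structure on $\utilde D$ with constants $\utilde J(c)$ satisfying $\utilde\Gamma$''. That sentence has Boolean value $1$ by the argument above, and the maximum principle yields a name forcing exactly the statement of the lemma.
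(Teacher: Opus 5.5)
Your argument is correct once you adopt the first of your two readings, i.e.\ add the hypothesis $\Vdash_{\mathsf C_I}``\utilde{J}(c)=\utilde{J}(d)\leftrightarrow(c=d)\in\utilde{\Gamma}\text{''}$. It takes a genuinely different route from the paper.

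The paper aims at an explicit name. It prescribes $\utilde{f}$ and $\utilde{R}$ on $\utilde{D}$ by clauses saying that an atomic fact holds iff its Boolean value lies in $\dot G_{\mathsf C_I}$, and interprets constants by $\utilde{J}(c)$. It then asserts the truth lemma $\Vdash_{\mathsf C_I}``\utilde{\mathcal M}_J\models\varphi\leftrightarrow[\varphi]_I\in\dot G_{\mathsf C_I}\text{''}$, via an induction on sentences that is not written out, and $\utilde{\mathcal M}_J\models\utilde{\Gamma}$ is immediate from it.

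You avoid any truth lemma on $D$. You realize $\Gamma$ in $V[G]$ by the Henkin term model and enlarge it by upward L\"owenheim--Skolem to cardinality $|D|$. You then copy it onto $D$ along a bijection extending $\utilde{J}[G](c)\mapsto c^{\mathcal N}$ and recover a name by fullness of $V^{\mathsf C_I}$. Satisfaction of $\Gamma$ is thus simply inherited through an isomorphism.

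This buys an argument that can actually be checked at the two points where the paper's induction is silent:
\begin{itemize}
\item the atomic case $c=d$, where identity on $D$ forces $\utilde{J}$ to be compatible with $\utilde{\Gamma}$;
\item the quantifier case, where $\forall x$ ranges over all of $D$, including the uncountably many points outside the range of $\utilde{J}$. The paper's clauses give no control there that would rule out counterexamples to sentences of $\Gamma$.
\end{itemize}
The price is canonicity: your name depends on a well-order of names and on the choices of $\mathcal N$ and $\pi$. That is harmless downstream, since Lemma~\ref{lem:first-order-value} only uses $\Vdash_{\mathsf C_I}``\utilde{\mathcal M}_J\models\utilde{\Gamma}\text{''}$.

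You are right that some such hypothesis is unavoidable, and it belongs in the statement rather than in the phrase ``as above''. Since $\utilde{\Gamma}$ is forced to be complete and $=$ is interpreted as identity, any name as in the lemma forces $\utilde{J}(c)=\utilde{J}(d)\leftrightarrow(c=d)\in\utilde{\Gamma}$. For example, a name $\utilde{J}$ forced to be constant would force $0=c$ for the Henkin constant $c$ with $T\vdash c=S0$, contradicting $\mathsf Q$. So the condition is necessary, and your argument shows it is sufficient.

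Drop the alternative $\utilde{J}'(c)=\utilde{J}(\min_{\sim} c)$, for two reasons:
\begin{itemize}
\item it gives only $c\sim d\Rightarrow\utilde{J}'(c)=\utilde{J}'(d)$, not the converse, because $\utilde{J}$ may identify inequivalent representatives;
\item it replaces the constants $\utilde{J}(c)$ that the statement prescribes.
\end{itemize}

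Finally, add the one line showing $D$ is uncountable. The map $\iota$ embeds the countable atomless algebra $\mathsf B$ into $\mathsf C_I$, so in $V[G]$ every ultrafilter on $\mathsf B$ extends to one on $\mathsf C_I$. Hence $|D|\ge 2^{\aleph_0}$.
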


\begin{proof}
We define the interpretation of the symbols of $
\mathcal L$ inside the Boolean-valued universe.  Since
$\utilde D=\operatorname{\utilde{St}}(\mathsf C_I)$ is forced to be the
set of ultrafilters on $\mathsf C_I$, it remains to define the
interpretations of functions, relations, and constants.

For every $n$-ary function symbol $f\in\mathcal L$, define
$\utilde f$ by the requirement that
\[
\Vdash_{\mathsf C_I}
``\utilde f(\utilde u_0,\ldots,\utilde u_{n-1})=\utilde v
\quad\Longleftrightarrow\quad
[\![f(\utilde u_0,\ldots,\utilde u_{n-1})=\utilde v]\!]\in
\dot G_{\mathsf C_I}  \text{''}.
\]

Similarly, for every $n$-ary relation symbol $R\in\mathcal L$, put
\[
\Vdash_{\mathsf C_I}
``R(\utilde u_0,\ldots,\utilde u_{n-1})
\quad\Longleftrightarrow\quad
[\![R(\utilde u_0,\ldots,\utilde u_{n-1})]\!]
\in\dot G_{\mathsf C_I} \text{''}.
\]
For a constant symbol $c\in C(\mathcal L)$ its interpretation is
$\utilde J(c)$ by definition.

We can now easily see, by induction on sentences of $\mathcal L$, that
\[
\Vdash_{\mathsf C_I}
``\utilde{\mathcal M}_J\models\varphi
\quad\Longleftrightarrow\quad
[\varphi]_I\in\dot G_{\mathsf C_I}  \text{''}.
\]
Finally note that as every axiom of $T$ has Boolean value one, every axiom of
$\utilde\Gamma$ is forced in $\utilde{\mathcal M}_J$. Therefore
$
\Vdash_{\mathsf C_I}
``\utilde{\mathcal M}_J\models\utilde\Gamma \text{''}.
$
This proves the lemma.
\end{proof}

Fix, from now on, $\utilde{\mathcal M}_J$ as above.
Let $\utilde N$ be a $\mathsf C_I$-name such that
\[
\Vdash_{\mathsf C_I}
``\utilde N\subseteq\dot G_{\mathsf C_I}
\text{ is a filter on }{\mathsf C}_I".
\]

\begin{definition}\label{def:translation0}
	For every $\mathcal L_m$-formula $\varphi(\vec x)$ and every
	appropriate tuple $\utilde{\vec a}$, we recursively define an element
	\[
	b(\varphi,\utilde{\vec a})=b_{\utilde N, \utilde J}(\varphi,\utilde{\vec a})
	\in\mathsf C_I
	\]
as follows:
\begin{itemize}
\item
	$
	b
	(t_1=t_2,\utilde{\vec a})
	=
	\left[\!\left[
	t_1^{\utilde{\mathcal M}_J}(\utilde{\vec a})
	=
	t_2^{\utilde{\mathcal M}_J}(\utilde{\vec a})
	\right]\!\right],
	$
\item
	$b
	(R(t_1,\ldots,t_k),\utilde{\vec a})
	=
	\left[\!\left[
	\utilde R
	\bigl(
	t_1^{\utilde{\mathcal M}_J}(\utilde{\vec a}),
	\ldots,
	t_k^{\utilde{\mathcal M}_J}(\utilde{\vec a})
	\bigr)
	\right]\!\right].
$

\item $
	b(\neg\varphi,\utilde{\vec a})
	=
	\neg b(\varphi,\utilde{\vec a}),
	$
\item
	$ b(\varphi\wedge\psi,\utilde{\vec a})
	=
	b(\varphi,\utilde{\vec a})
	\wedge
	b(\psi,\utilde{\vec a}).
$

\item $
	b
	(\forall x\,\varphi,\utilde{\vec a})
	=
	\bigwedge_{\utilde u\in\operatorname{dom}(\utilde D)}
	\left(
	\utilde D(\utilde u)
	\to
	b
	(\varphi,\utilde{\vec a}[x\mapsto\utilde u])
	\right).
$
\item $
	b
	(\Box\varphi,\utilde{\vec a})
	=
	\left[\!\left[
	{
	b(\varphi,\utilde{\vec a})
	}
	\in\utilde N
	\right]\!\right].
$
\end{itemize}
\end{definition}

We are now ready to define the set theoretic translation.
\begin{definition}\label{def:translation}
Suppose $\varphi(\vec x)$ is an $\mathcal L_m$-formula
	 $\utilde{\vec a}$ is a $\mathsf C_I$ for elements of $\utilde D$.	The set-theoretic translation
$
\nu_{\utilde N,\utilde J}
	(\varphi)(\utilde{\vec a})$
	is defined as the $\mathcal L_{\in}$-sentence
	\[
\nu_{\utilde N,\utilde J}
	(\varphi)(\utilde{\vec a}):=	{
	b_{\utilde N, \utilde J}(\varphi,\utilde{\vec a})
	}
	\in
	\dot G_{\mathsf C_I}.
	\]
When there is no confusion, we delete the subscripts $\utilde N,\utilde J$ and  write $
	\utilde{\nu}(\varphi)(\utilde{\vec a})
	=
	\nu_{\utilde N,\utilde J}
	(\varphi)(\utilde{\vec a}).
$
\end{definition}

\begin{lemma}\label{lem:first-order-value}
	For every $\mathcal L$-sentence $\varphi$, we have
$
	b_{\utilde N,\utilde J}(\varphi)
	=
	[\varphi]_I.
$
	Consequently,
\[\left[\!\left[
\nu_{\utilde N,\utilde J}(\varphi)
\right]\!\right]
=
[\varphi]_I.\]
\end{lemma}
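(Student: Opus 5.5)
The proof goes by induction on the complexity of formulas. The induction has to be carried out for $\mathcal L$-formulas with parameters, since the quantifier clause of Definition~\ref{def:translation0} introduces names $\utilde u$ for elements of $\utilde D$. Concretely, we prove that for every $\mathcal L$-formula $\varphi(\vec x)$ and every tuple $\utilde{\vec a}$ of names for elements of $\utilde D$,
\[
b_{\utilde N,\utilde J}(\varphi,\utilde{\vec a})=\bigl[\!\bigl[\utilde{\mathcal M}_J\models\varphi[\utilde{\vec a}]\bigr]\!\bigr].
\]
For sentences this combines with the equivalence established in the proof of Lemma~\ref{lem:stone-model},
\[
\Vdash_{\mathsf C_I}``\utilde{\mathcal M}_J\models\varphi\iff[\varphi]_I\in\dot G_{\mathsf C_I}",
\]
and with the fact that in a complete Boolean algebra $[\![c\in\dot G_{\mathsf C_I}]\!]=c$ for every $c\in\mathsf C_I$. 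Together these give
\[
b(\varphi)=[\![\utilde{\mathcal M}_J\models\varphi]\!]=[\![\,[\varphi]_I\in\dot G_{\mathsf C_I}\,]\!]=[\varphi]_I .
\]
The second assertion then follows from the same identity $[\![c\in\dot G]\!]=c$ applied to $c=b(\varphi)$, using Definition~\ref{def:translation}.

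The inductive steps are as follows.
\begin{itemize}
\item Atomic formulas: the equality holds by the definition of $b$ and of the interpretations $\utilde f$, $\utilde R$, $\utilde J(c)$ in $\utilde{\mathcal M}_J$. Terms are evaluated inside $\utilde{\mathcal M}_J$, so the Boolean value of $t_1=t_2$ or $R(\vec t)$ is exactly the Boolean value of its satisfaction.
\item Negation and conjunction: these are immediate, because $[\![\cdot]\!]$ commutes with $\neg$ and $\wedge$.
\item The universal quantifier: here we use the standard fact that for a name $\utilde D$,
\[
[\![\forall u\in\utilde D\,\psi(u)]\!]=\bigwedge_{\utilde u\in\operatorname{dom}(\utilde D)}\bigl(\utilde D(\utilde u)\to[\![\psi(\utilde u)]\!]\bigr).
\]
This holds for formulas $\psi$ that respect Boolean equality, i.e.\ $[\![\utilde u=\utilde v]\!]\wedge[\![\psi(\utilde u)]\!]\le[\![\psi(\utilde v)]\!]$, and satisfaction in $\utilde{\mathcal M}_J$ is such a formula. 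Applying the induction hypothesis to each $\varphi(\utilde{\vec a}[x\mapsto\utilde u])$ finishes this case.
\end{itemize}
Since $\varphi$ is an $\mathcal L$-formula, the $\Box$ clause never arises. This is why the value is independent of $\utilde N$.

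The main obstacle is the quantifier step together with the atomic case. One must check that the clause-by-clause definition of $b$ really matches the recursive Tarskian satisfaction inside the Boolean-valued model, in particular that bounded quantification over $\operatorname{dom}(\utilde D)$ computes the correct Boolean value (the mixing/fullness property of $V^{\mathsf C_I}$). I would handle this by appealing to the fullness of $V^{\mathsf C_I}$ and the displayed bounded-quantifier formula, which is standard (e.g.\ \cite{Bell2005}).

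Alternatively, one can bypass satisfaction entirely. For a sentence $\varphi$, the Henkin property of $T$ (which is why we passed to $T^H$) reduces the quantifier case of $b(\forall x\varphi)$ to the constants $c$, via $[\forall x\varphi]=\bigwedge_c[\varphi(c)]$ in $\mathsf B$. By injectivity and the homomorphism property of $\iota$, the atomic case then reduces to checking $b(R(\vec c))=[R(\vec c)]_I$ directly from Lemma~\ref{lem:stone-model}.
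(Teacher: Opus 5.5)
Your main argument is correct and is essentially the paper's proof. Both identify $b_{\utilde N,\utilde J}(\varphi)$ with $[\![\utilde{\mathcal M}_J\models\varphi]\!]$ (the paper just asserts this, while you check it by induction over formulas with parameters) and then use that $\utilde{\mathcal M}_J$ satisfies exactly $\utilde\Gamma$. The only difference is that the paper gets the equality from the statement $\Vdash_{\mathsf C_I}\utilde{\mathcal M}_J\models\utilde\Gamma$ by proving $[\varphi]_I\le b(\varphi)$ and the same inequality for $\neg\varphi$, whereas you cite the biconditional from the proof of Lemma~\ref{lem:stone-model}; these are equivalent because $\utilde\Gamma$ is forced to be complete.

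Your Henkin-constant alternative, however, does not really bypass satisfaction. Since $b(\forall x\varphi)$ is a meet over all of $\operatorname{dom}(\utilde D)$, the inequality $b(\varphi(c_{\exists x\neg\varphi}))\le b(\forall x\varphi)$ requires $\utilde{\mathcal M}_J$ to satisfy the Henkin axioms, which is again Lemma~\ref{lem:stone-model}.
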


\begin{proof}
It is easily seen that
\[
[\varphi]_I = \left[\!\left[
	{\varphi}\in\utilde{\Gamma}
	\right]\!\right]
	\leq
	\left[\!\left[
	\utilde{\mathcal M}_{J}\models{\varphi}
	\right]\!\right]
	=
	b_{\utilde N, \utilde J}(\varphi).
\]	
Applying the same argument to $\neg\varphi$, we obtain
	\[
	[\neg\varphi]_I
	\leq
	b_{\utilde N, \utilde J}(\neg\varphi)
	=
	\neg b_{\utilde N, \utilde J}(\varphi).
	\]
It follows that
$
	b_{\utilde N, \utilde J}(\varphi)
	=
	[\varphi]_I.
$
	The second assertion follows immediately.
\end{proof}

\begin{coro}\label{cor:first-order-forcing}
	If $\varphi$ is an $\mathcal L$-sentence and $T\vdash\varphi$, then
$
	\Vdash_{\mathsf C_I}
	\nu_{\utilde N,\utilde J}(\varphi).
$
\end{coro}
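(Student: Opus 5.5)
The corollary follows by combining Lemma \ref{lem:first-order-value} with the observation that a sentence provable in $T$ has top Boolean value. Assume $T\vdash\varphi$. We first compute $[\varphi]$ in $\mathsf B=\mathsf B_T$. Provability means $\varphi$ is $T$-equivalent to any tautology, so $[\varphi]=1_{\mathsf B}$. Then $N_{[\varphi]}=\operatorname{St}(\mathsf B)$, since every ultrafilter contains the top element. Hence
\[
[\varphi]_I=\operatorname{St}(\mathsf B)/I=1_{\mathsf A_I}.
\]
Equivalently, this is $\iota(1_{\mathsf B})=1_{\mathsf C_I}$, because $\iota$ is a Boolean algebra homomorphism.

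We must check that this top element stays the top element in $\mathsf C_I$. This uses the standing assumption $\mathsf A_I\subseteq\mathsf C_I$, understood as a Boolean subalgebra. Since $\mathsf P_I=\mathsf A_I^+$ is dense in $\mathsf C_I=\mathsf{RO}(\mathsf P_I)$, the top elements coincide. I expect this to be the only point needing care, and it is immediate from the density of the completion embedding.

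Next, Lemma \ref{lem:first-order-value} gives
\[
\left[\!\left[\nu_{\utilde N,\utilde J}(\varphi)\right]\!\right]=[\varphi]_I=1_{\mathsf C_I}.
\]
By the definition of the forcing relation in the Boolean-valued universe, $\Vdash_{\mathsf C_I}\psi$ holds iff $[\![\psi]\!]=1_{\mathsf C_I}$. Therefore $\Vdash_{\mathsf C_I}\nu_{\utilde N,\utilde J}(\varphi)$.

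This argument does not depend on the choice of $\utilde N$ or $\utilde J$. The reason is that Lemma \ref{lem:first-order-value} holds uniformly for all such names, since the modal clause never occurs in an $\mathcal L$-sentence.
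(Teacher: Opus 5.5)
Your proposal is correct and follows the paper's proof: from $T\vdash\varphi$ you get $[\varphi]=1_{\mathsf B}$, hence $[\varphi]_I=1_{\mathsf C_I}$, and Lemma~\ref{lem:first-order-value} turns this into the translation having Boolean value $1_{\mathsf C_I}$, i.e.\ being forced. Your extra checks are sound elaborations of steps the paper leaves implicit: that the top of $\mathsf A_I$ remains the top of $\mathsf C_I$ by density, and that the argument is uniform in $\utilde N,\utilde J$ because no modal clause occurs.
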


\begin{proof}
	If $T\vdash\varphi$, then
$
	[\varphi]=1_{\mathsf B_T},$
	and therefore
$
	[\varphi]_I=1_{\mathsf C_I}.
$
	The result follows from Lemma~\ref{lem:first-order-value}.
\end{proof}
\section{Soundness and completeness} \label{Sec:Com}

Throughout this section, we adopt the notation introduced in the previous section. In particular, we write $\utilde{\nu}(\varphi(\vec{x})) := \nu_{\utilde{N},\utilde{J}}(\varphi(\vec{x}))$, and define $\utilde{\nu}(T):= \{\utilde{\nu}(\varphi): \varphi\in T\}$ and
$\utilde{\nu}(T_m):= \{\utilde{\nu}(\varphi) : \varphi\in T_m\}$.

\begin{lemma}
	For every $\varphi\in\mathcal{L}$, if
	$T\vdash\varphi$, then $\Vdash_{\mathsf{C}_I} 	``\utilde{\nu}(T)\vdash\utilde{\nu}(\varphi)$".
\end{lemma}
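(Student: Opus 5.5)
The statement is ambiguous: ``$\utilde{\nu}(T)\vdash\utilde{\nu}(\varphi)$'' could mean derivability in the generic extension, or it could mean that whenever all translations of axioms of $T$ hold, so does $\utilde{\nu}(\varphi)$. I will establish the stronger semantic fact and then read off the syntactic one. The plan is to reduce everything to Corollary~\ref{cor:first-order-forcing} and Lemma~\ref{lem:first-order-value}.

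First I dispose of the open-formula case. If $T\vdash\varphi(\vec x)$, then by generalization $T\vdash\forall\vec x\,\varphi$. By the clause for $\forall$ in Definition~\ref{def:translation0}, $b(\forall\vec x\,\varphi)$ is the meet over names $\utilde u$ for elements of $\utilde D$ of $\utilde D(\utilde u)\to b(\varphi,\utilde{\vec u})$. Hence if $b(\forall\vec x\,\varphi)=1$, then for every tuple of names for elements of $\utilde D$ the instance is forced. So it suffices to treat sentences, and I assume $\varphi$ is an $\mathcal L$-sentence.

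Second, since $T\vdash\varphi$, Corollary~\ref{cor:first-order-forcing} gives $\Vdash_{\mathsf C_I}\utilde{\nu}(\varphi)$. By Lemma~\ref{lem:first-order-value}, this is the statement $[\![\utilde{\nu}(\varphi)]\!]=[\varphi]_I=1$. The same applies to each axiom $\psi\in T$, so $\Vdash_{\mathsf C_I}\utilde{\nu}(\psi)$ for all $\psi\in T$. In any generic extension $V[G]$, every member of $\utilde{\nu}(T)$ and $\utilde{\nu}(\varphi)$ itself are therefore true. The consequence relation holds trivially because its conclusion is outright true.

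To obtain a genuine derivation inside $V[G]$, I use finiteness of proofs. Fix a proof $\psi_1,\dots,\psi_n\vdash\varphi$ in $V$ with $\psi_i\in T$; it is absolute to $V[G]$. In $\mathsf B_T$ we have $[\psi_1]\wedge\dots\wedge[\psi_n]\le[\varphi]$. Applying the injective homomorphism $\iota$ gives $\bigwedge_i[\psi_i]_I\le[\varphi]_I$. Via Lemma~\ref{lem:first-order-value}, this is $\bigwedge_i[\![\utilde{\nu}(\psi_i)]\!]\le[\![\utilde{\nu}(\varphi)]\!]$. Hence
\[
\Vdash_{\mathsf C_I}``\utilde{\nu}(\psi_1)\wedge\dots\wedge\utilde{\nu}(\psi_n)\to\utilde{\nu}(\varphi)\text{''}.
\]
Combining this implication with the translated premises by modus ponens gives the required derivation from $\utilde{\nu}(T)$ in $V[G]$.

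The main obstacle is purely interpretive: making precise the sense of ``$\vdash$'' inside the extension, and checking that Lemma~\ref{lem:first-order-value} applies to each $\psi_i$. It does, since the $\psi_i$ are $\mathcal L$-sentences. The remaining steps are routine Boolean-value calculations.
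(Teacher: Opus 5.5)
Your proof is correct, but it takes a different route from the paper. The paper argues by induction on the length of a derivation of $\varphi$ from $T$, with four cases:
\begin{itemize}
\item axioms of $T$ lie in $\utilde{\nu}(T)$;
\item logical axioms have $[\varphi]_I=1_{\mathsf C_I}$;
\item modus ponens is handled by $[\![\psi\wedge(\psi\to\varphi)]\!]\le[\![\varphi]\!]$;
\item generalization is handled by the $\forall$-clause of the translation.
\end{itemize}
You skip the induction. By Corollary~\ref{cor:first-order-forcing} and Lemma~\ref{lem:first-order-value}, $\utilde{\nu}(\varphi)$ has Boolean value $[\varphi]_I=1$, so the consequence holds trivially. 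This also yields $[\![\bigwedge_i\utilde{\nu}(\psi_i)\to\utilde{\nu}(\varphi)]\!]=1$, which is exactly how the paper later reads ``$\vdash$'' in Lemma~\ref{lem:necessitation}.

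Your route is shorter and shows that the lemma is really a corollary of Lemma~\ref{lem:first-order-value}. Indeed, the paper's logical-axiom case already uses $T\vdash\varphi\Rightarrow[\varphi]_I=1$, and that fact alone settles every case. What the paper's structural induction buys is a template matching the modal soundness argument (the $\mathsf K$ and $\mathsf T$ axioms and necessitation). There Lemma~\ref{lem:first-order-value} is unavailable for formulas containing $\Box$.

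A few small remarks:
\begin{itemize}
\item Your fixed proof $\psi_1,\dots,\psi_n$ does no work, since every $[\psi_i]$ and $[\varphi]$ equals $1$ in $\mathsf B_T$.
\item Your final ``modus ponens'' step only yields truth in $V[G]$, not a derivation in a pure calculus. That is the same Boolean-valued reading the paper uses, so it is fine here. Under a strictly syntactic reading, note that by Definition~\ref{def:translation} every $T$-provable $\mathcal L$-sentence and every axiom of $T$ translate to the same $\mathcal L_\in$-sentence $1_{\mathsf C_I}\in\dot G_{\mathsf C_I}$.
\item In your reduction for open formulas, the $\forall$-clause gives $\utilde D(\utilde u)\le b(\varphi,\utilde u)$ for $\utilde u\in\operatorname{dom}(\utilde D)$. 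So each instance is forced below $\utilde D(\utilde u)$, not outright.
\end{itemize}
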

\begin{proof}
	We argue by induction on the length of a derivation of $\varphi$ from
	$T$.
	
	If $\varphi\in T$, then
	$\utilde{\nu}(\varphi)\in\utilde{\nu}(T)$ by definition, and hence $\Vdash_{\mathsf{C}_I}
	``\utilde{\nu}(T)\vdash\utilde{\nu}(\varphi)$".
	
	Next, suppose that $\varphi$ is an instance of a first-order axiom.
	Since $T\vdash\varphi$, we have $[\varphi] = 1_{\mathsf{B}}$ and therefore $N_{[\varphi]} =  \operatorname{St}(\mathsf{B})$. Consequently, $[\varphi]_I = 1_{\mathsf{P}_I}$ and hence $\Vdash_{\mathsf{C}_I} ``[\varphi]_I\in\dot{G}_{\mathsf{C}_I}"$. This implies that $\Vdash_{\mathsf{C}_I}
	\utilde{\nu}(\varphi)$.
	
	Now, assume that $\varphi$ is obtained from $\psi$ and $\psi\rightarrow\varphi$ by the MP rule. By the induction hypothesis, $\Vdash_{\mathsf{C}_I} ``\utilde{\nu}(T)\vdash\utilde{\nu}(\psi)"$ and $\Vdash_{\mathsf{C}_I} ``\utilde{\nu}(T)\vdash \utilde{\nu}(\psi\rightarrow\varphi)"$.
	Since \[[\![\psi\wedge(\psi\rightarrow\varphi)]\!] \leq [\![\varphi]\!],\]
	it follows that $\Vdash_{\mathsf{C}_I} 	``\utilde{\nu}(T)\vdash\utilde{\nu}(\varphi)"$.

	Finally, suppose that $\varphi$ is of the form $\forall x\,\psi$ and is obtained from $\psi$ by the Generalization rule. By the induction hypothesis, $\Vdash_{\mathsf{C}_I} ``\utilde{\nu}(T)\vdash \utilde{\nu}(\psi)"$. By the recursive definition of the translation,
	\[\utilde{\nu}(\forall x\,\psi) = \forall x \bigl( x\in\operatorname{\utilde{St}}(\mathsf{P}_I) \rightarrow \utilde{\nu}(\psi) \bigr). \]
	Hence, $\Vdash_{\mathsf{C}_I} ``\utilde{\nu}(T)\vdash \utilde{\nu}(\forall x\,\psi)"$. This completes the induction.
\end{proof}

\begin{lemma}
	$\Vdash_{\mathsf{C}_I} \utilde{\nu}(\chi)$ holds for $\chi\in \mathcal{L}_m$ where $\chi$ is one of the following forms
	\begin{enumerate}
		\item $\Box \varphi \to \varphi$;
		\item $\Box (\varphi\to \psi)  \to (\Box\varphi\to \Box \psi)$.
	\end{enumerate}
\end{lemma}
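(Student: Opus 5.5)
Both claims reduce to computing Boolean values and showing they equal $1_{\mathsf C_I}$. We use the definition of $b=b_{\utilde N,\utilde J}$ in Definition~\ref{def:translation0} and the fact that $\utilde\nu(\chi)(\utilde{\vec a})$ is the statement $b(\chi,\utilde{\vec a})\in\dot G_{\mathsf C_I}$. Since $\dot G_{\mathsf C_I}$ is forced to be the generic ultrafilter, $[\![c\in\dot G_{\mathsf C_I}]\!]=c$ for every $c\in\mathsf C_I$. So it suffices to show $b(\chi,\utilde{\vec a})=1_{\mathsf C_I}$ for every tuple $\utilde{\vec a}$ of names for elements of $\utilde D$. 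Because $b(\cdot)$ commutes with $\neg,\wedge$ (hence with $\to$), this reduces to Boolean inequalities among the values $b(\Box\varphi,\utilde{\vec a})$, $b(\varphi,\utilde{\vec a})$, and so on. Throughout, write $p=b(\varphi,\utilde{\vec a})$ and $q=b(\psi,\utilde{\vec a})$, which are elements of $\mathsf C_I$ in the ground model, i.e.\ check names.

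For (1) we need $b(\Box\varphi,\utilde{\vec a})\le p$, that is, $[\![\check p\in\utilde N]\!]\le p$. The hypothesis on $\utilde N$ gives $\Vdash_{\mathsf C_I}\utilde N\subseteq\dot G_{\mathsf C_I}$. Hence
\[
[\![\check p\in\utilde N]\!]\le[\![\check p\in\dot G_{\mathsf C_I}]\!]=p,
\]
and so $b(\Box\varphi\to\varphi,\utilde{\vec a})=\neg[\![\check p\in\utilde N]\!]\vee p=1$.

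For (2) we must show
\[
[\![(\check p\to\check q)\in\utilde N]\!]\wedge[\![\check p\in\utilde N]\!]\le[\![\check q\in\utilde N]\!].
\]
The left side equals $[\![(\check p\to\check q)\in\utilde N\wedge\check p\in\utilde N]\!]$. Since $\utilde N$ is forced to be a filter, it is forced to be closed under meets, so the left side is at most $[\![(\check p\wedge(\check p\to\check q))\in\utilde N]\!]$. In the Boolean algebra, $p\wedge(p\to q)=p\wedge q\le q$, and this inequality holds in $V$, so it is forced. Upward closure of the forced filter then gives
\[
[\![(\check p\wedge(\check p\to\check q))\in\utilde N]\!]\le[\![\check q\in\utilde N]\!].
\]
Combining the two steps yields $b(\chi,\utilde{\vec a})=1$.

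The main point needing care is bookkeeping rather than mathematics. One must check that $b(\varphi\to\psi,\utilde{\vec a})$ is literally $p\to q$ computed in $\mathsf C_I$. This holds because $\to$ is defined from $\neg,\wedge$ and $b$ commutes with both. One must also check that the Boolean values of statements about the check names $\check p,\check q$ behave as above, namely that $[\![\check c\in\dot G]\!]=c$ and that the forced filter properties transfer to inequalities of Boolean values. When free variables are present, the argument is uniform in $\utilde{\vec a}$. The forced statement is therefore the universal closure, using the clause for $\forall$ in Definition~\ref{def:translation0}, which is a meet of values each equal to $1$.
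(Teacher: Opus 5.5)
Your proposal is correct and follows essentially the same route as the paper: for (1) you use $\Vdash_{\mathsf C_I}\utilde N\subseteq\dot G_{\mathsf C_I}$ together with $[\![\check p\in\dot G_{\mathsf C_I}]\!]=p$, and for (2) you use that the forced filter $\utilde N$ is closed under meets and upward closed, applied to $p\wedge(p\to q)\le q$. Your extra bookkeeping makes explicit what the paper leaves implicit: identifying $[\![\utilde\nu(\chi)(\utilde{\vec a})]\!]$ with $b(\chi,\utilde{\vec a})$, and handling free variables through the $\forall$ clause.
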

\begin{proof}
	\begin{enumerate}
		\item It suffices to show that $[\![\Box\varphi\rightarrow\varphi]\!]=1$.
		Equivalently, \[[\![[\![\varphi]\!]\in\utilde N]\!] 	\le [\![\varphi]\!].\]
		Since $[\![\varphi]\!] = [\![[\![\varphi]\!]\in\dot G_{\mathsf C_I}]\!]$
		and $\Vdash_{\mathsf C_I} ``\utilde N\subseteq\dot G_{\mathsf C_I}"$, 	we obtain
		\[	[\![[\![\varphi]\!]\in\utilde N]\!] \le [\![\varphi]\!].\]
		\item It is enough to prove that $[\![\Box(\varphi\rightarrow\psi)\wedge\Box\varphi]\!] 	\le
		[\![\Box\psi]\!]$.
		Indeed,
		\begin{align*}
			[\![\Box(\varphi\rightarrow\psi)\wedge\Box\varphi]\!]
			&=
			[\![
			[\![\varphi\rightarrow\psi]\!]\in\utilde N
			\wedge
			[\![\varphi]\!]\in\utilde N
			]\!]\\
			&\le
			[\![
			([\![\varphi\rightarrow\psi]\!]\wedge[\![\varphi]\!])
			\in\utilde N
			]\!],
		\end{align*}
		since $\Vdash_{\mathsf C_I} ``\utilde N \text{ is a filter}"$.
		Moreover, $[\![\varphi\rightarrow\psi]\!] \wedge [\![\varphi]\!] \le 	[\![\psi]\!]$, and therefore, by the monotonicity of filters,
		\[ [\![ ([\![\varphi\rightarrow\psi]\!]\wedge[\![\varphi]\!]) \in\utilde N
		]\!] \le [\![ [\![\psi]\!] 	\in\utilde N ]\!].	\]
		Hence,
		\[[\![\Box(\varphi\rightarrow\psi)\wedge\Box\varphi]\!] \le [\![ [\![\psi]\!]
		\in\utilde N ]\!] = [\![\Box\psi]\!], \]
		which completes the proof.
	\end{enumerate}
\end{proof}

\begin{lemma}\label{lem:necessitation}
	If $\Vdash_{\mathsf{C}_I} ``\utilde{\nu}(T_m)\vdash
	\utilde{\nu}(\varphi)"$, then
	$\Vdash_{\mathsf{C}_I} ``\utilde{\nu}(T_m)\vdash
	\utilde{\nu}(\Box\varphi)"$.
\end{lemma}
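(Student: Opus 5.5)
The plan is to work inside an arbitrary generic extension $V[G]$, where $G$ is $\mathsf C_I$-generic, and to argue by induction on the length of a derivation of $\utilde{\nu}(\varphi)[G]$ from $\utilde{\nu}(T_m)[G]$. This mirrors the induction used in the lemma above for $T$. Since the statement is a forcing statement, it suffices to show that for every such $G$, $V[G]\models$ ``$\utilde{\nu}(T_m)\vdash\utilde{\nu}(\varphi)$'' implies $V[G]\models$ ``$\utilde{\nu}(T_m)\vdash\utilde{\nu}(\Box\varphi)$''.

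I will restrict attention to derivations whose lines are all translations $\utilde{\nu}(\psi)$ of $\mathcal L_m$-formulas. This is justified because $\utilde{\nu}$ commutes with the propositional connectives, by the recursive clauses of Definition~\ref{def:translation0}. The induction then has four cases.

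\emph{Hypothesis.} If $\utilde{\nu}(\varphi)\in\utilde{\nu}(T_m)$, then $\varphi\in T_m$. Since $T_m$ is a normal modal theory closed under deduction, it is closed under Necessitation, so $\Box\varphi\in T_m$. Hence $\utilde{\nu}(\Box\varphi)\in\utilde{\nu}(T_m)$.

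\emph{Logical axiom.} If $\utilde{\nu}(\varphi)$ is an instance of a logical axiom, then $b(\varphi)=1_{\mathsf C_I}$, as in the preceding lemma. Since $\utilde N$ is forced to be a filter, it is forced that $1_{\mathsf C_I}\in\utilde N$. Therefore
\[
b(\Box\varphi)=[\![1\in\utilde N]\!]=1,
\]
and $\utilde{\nu}(\Box\varphi)$ is forced outright.

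\emph{Modus Ponens.} Suppose $\utilde{\nu}(\varphi)$ comes from $\utilde{\nu}(\psi)$ and $\utilde{\nu}(\psi\to\varphi)$. The induction hypothesis gives derivations of $\utilde{\nu}(\Box\psi)$ and $\utilde{\nu}(\Box(\psi\to\varphi))$. The previous lemma shows that
\[
\utilde{\nu}\bigl(\Box(\psi\to\varphi)\to(\Box\psi\to\Box\varphi)\bigr)
\]
is forced, hence true in $V[G]$, and may be added as a line. Two applications of Modus Ponens then yield $\utilde{\nu}(\Box\varphi)$.

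\emph{Generalization.} This is the step I expect to be the main obstacle. From $\utilde{\nu}(\psi)$ we get $\utilde{\nu}(\forall x\,\psi)$, and we need $\utilde{\nu}(\Box\forall x\,\psi)$. The induction hypothesis only gives $\utilde{\nu}(\Box\psi)$, that is, $\forall x\,\Box\psi$ after generalizing. Passing from this to $\Box\forall x\,\psi$ is a Barcan-type step, and it is not automatic for the filter $\utilde N$: the value $b(\forall x\,\psi)$ is an infinite meet, while a filter is only closed under finite meets.

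My first attempt will exploit the standing assumption that $T$ is Henkinian. Using the Henkin axioms and the constant $c_{\exists x\neg\psi}$, the formula $\forall x\,\psi$ is $T$-provably equivalent to the sentence $\psi(c_{\exists x\neg\psi})$. So I will normalize derivations so that Generalization is replaced by instantiation at a Henkin witness. For that sentence the value $b$ equals $[\psi(c)]_I$ by Lemma~\ref{lem:first-order-value} in the first-order case, and membership in $\utilde N$ transfers along provable equivalence by upward closure of the filter. In the purely first-order case I will also use Corollary~\ref{cor:first-order-forcing} to treat the generalized formula as an axiom-instance with value $1$.

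Once Generalization is eliminated in this way, the remaining cases close the induction. Since $G$ was arbitrary, $\Vdash_{\mathsf C_I}$ ``$\utilde{\nu}(T_m)\vdash\utilde{\nu}(\Box\varphi)$'' follows.
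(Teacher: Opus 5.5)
Your Hypothesis and Modus Ponens cases are fine; the K-instance lies in $T_m$, so it can enter as a hypothesis. The Generalization case, however, is a genuine gap, and it is the case that matters.

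When the generalized formula $\psi$ contains $\Box$, the induction hypothesis gives only $b(\psi,\utilde u)\in\utilde N[G]$ for each name $\utilde u$. But $\utilde{\nu}(\Box\forall x\,\psi)$ requires the infinite meet $\bigwedge_{\utilde u\in\operatorname{dom}(\utilde D)}\bigl(\utilde D(\utilde u)\to b(\psi,\utilde u)\bigr)$ to lie in $\utilde N[G]$, and a filter need not be closed under infinite meets.

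Your Henkin repair does not reach this case. It is phrased through $T$-provable equivalence and Lemma~\ref{lem:first-order-value}, and both concern $\mathcal L$-formulas only. Transferring membership in $\utilde N$ by upward closure needs $b(\psi,\utilde J(c))\le b(\forall x\,\psi)$. For modal $\psi$, nothing about arbitrary $\utilde N,\utilde J$ gives this, and the paper's Henkin axioms are not stated for modal formulas. So the Barcan-type case you flagged is left open.

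Two other steps are also unsupported:
\begin{enumerate}
\item The reduction to derivations whose lines are all translations. Syntactically, $\utilde{\nu}(\neg\varphi)$ is the sentence $\neg b(\varphi)\in\dot G_{\mathsf C_I}$, which is equivalent to $\neg\utilde{\nu}(\varphi)$ only because the generic is an ultrafilter. Moreover, a derivation in $V[G]$ may pass through set-theoretic formulas that are translations of nothing, and for these ``$\Box$ of a line'' is undefined.
\item The logical-axiom case. The preceding lemma you cite covers only $\mathcal L$-formulas, so modal instances of logical axioms are not handled.
\end{enumerate}

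The missing idea is that no induction on derivations is needed. The paper uses finiteness of derivations to obtain a finite $F\subseteq T_m$ with
\[
\bigwedge_{\psi\in F}[\![\utilde{\nu}(\psi)]\!]\le[\![\utilde{\nu}(\varphi)]\!].
\]
It then takes $[\![\utilde{\nu}(\psi)]\!]\in\utilde N$ for each $\psi\in F$; this is the counterpart of your Hypothesis case, via $\Box\psi\in T_m$. Closure of $\utilde N$ under finite meets, followed by upward closure, gives $[\![\utilde{\nu}(\varphi)]\!]\in\utilde N$, which is exactly $\utilde{\nu}(\Box\varphi)$. Only finitely many hypotheses, all of them sentences, are ever intersected, so the infinite meet never arises. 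Every Generalization step inside the derivation is absorbed into that single Boolean inequality.
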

\begin{proof}
	Assume that
	\[
	\Vdash_{\mathsf{C}_I}
	``\utilde{\nu}(T_m)\vdash\utilde{\nu}(\varphi)" .
	\]
	Hence there is a finite set
	$F\subseteq T_m$ such that
	\[
	\left[\!\left[
	\left(\bigwedge_{\psi\in F}\utilde{\nu}(\psi)\right)
	\rightarrow\utilde{\nu}(\varphi)
	\right]\!\right]=1 .
	\]
	Hence
	\[
	\left[\!\left[
	\bigwedge_{\psi\in F}\utilde{\nu}(\psi)
	\right]\!\right]
	\leq
	\left[\!\left[
	\utilde{\nu}(\varphi)
	\right]\!\right].
	\]

	For each $\psi\in F$, since $\psi\in T_m$, we have
	\[
	\Vdash_{\mathsf{C}_I}
	``\left[\!\left[\utilde{\nu}(\psi)\right]\!\right]
	\in\utilde N".
	\]
	Because $\utilde N$ is forced to be a filter, it is closed under
	finite meets. Therefore,
	\[
	\Vdash_{\mathsf{C}_I}
	``\left[\!\left[
	\bigwedge_{\psi\in F}\utilde{\nu}(\psi)
	\right]\!\right]\in\utilde N".
	\]
	Using the upward closure of filters and the inequality above, we get
	\[
	\Vdash_{\mathsf{C}_I}
	``\left[\!\left[
	\utilde{\nu}(\varphi)
	\right]\!\right]\in\utilde N".
	\]

	By the definition of the translation of the modal operator in the
	Henkin expansion,
	\[
	\utilde{\nu}(\Box\varphi)=
	\left[\!\left[\utilde{\nu}(\varphi)\right]\!\right]\in\utilde N .
	\]
	Hence
$
	\Vdash_{\mathsf{C}_I}
	``\utilde{\nu}(T_m)\vdash
	\utilde{\nu}(\Box\varphi) \text{''} ,
$
	which proves the lemma.
\end{proof}

As an immediate consequence of the preceding lemmas, we obtain the following soundness theorem.

\begin{theo}[Soundness]\label{theo:soundness}
	Suppose $T_m\vdash\varphi$. Then for all $\utilde{N}$ and $\utilde{J}$ as above,
	\[\Vdash_{\mathsf{C}_I}``\nu_{\utilde{N}, \utilde{J}}(T_m) \vdash \nu_{\utilde{N}, \utilde{J}}(\varphi)".\]
\end{theo}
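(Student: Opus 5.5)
We will argue by induction on the length of a derivation of $\varphi$ in $T_m$, proving for each line $\chi$ of the derivation that
\[
\Vdash_{\mathsf C_I}``\nu_{\utilde N,\utilde J}(T_m)\vdash\nu_{\utilde N,\utilde J}(\chi)",
\]
uniformly for arbitrary fixed names $\utilde N$ and $\utilde J$ as in Section~\ref{Sec:Trans}. The point is that every case has already been handled by one of the preceding lemmas, so the theorem amounts to assembling them along the derivation.

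\textbf{Base cases.} If $\chi\in T_m$ is a nonlogical axiom, then $\utilde\nu(\chi)\in\utilde\nu(T_m)$ by definition, and the claim is immediate.

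If $\chi$ is a first-order logical axiom, or an $\mathcal L$-sentence provable in $T$, we use the lemma preceding the modal lemmas together with Corollary~\ref{cor:first-order-forcing}. This gives $\Vdash_{\mathsf C_I}\utilde\nu(\chi)$, hence derivability from $\utilde\nu(T_m)$.

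If $\chi$ is an instance of $\Box\varphi\to\varphi$ or of $\Box(\varphi\to\psi)\to(\Box\varphi\to\Box\psi)$, the lemma on the modal axioms gives $\Vdash_{\mathsf C_I}\utilde\nu(\chi)$ directly.

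\textbf{Inductive steps.}
\begin{enumerate}
\item \emph{Modus Ponens.} The translation commutes with $\to$, since $b(\psi\to\chi)=b(\psi)\to b(\chi)$. Hence $\utilde\nu(\psi\to\chi)$ is forced to be equivalent to $\utilde\nu(\psi)\to\utilde\nu(\chi)$. By the induction hypotheses, modus ponens inside the generic extension closes the step, exactly as in the first-order lemma.
\item \emph{Generalization.} From $\chi$ we pass to $\forall x\,\chi$. The definition of $b(\forall x\,\chi)$ is a meet over names $\utilde u$ for elements of $\utilde D$, and the hypothesis holds uniformly in the parameter $\utilde u$. So we argue as in the first-order lemma: we internalize it as the universal sentence $\forall x\in\utilde D\,\utilde\nu(\chi)(x)$.
\item \emph{Necessitation.} We apply Lemma~\ref{lem:necessitation} to the induction hypothesis for $\chi$ to get the claim for $\Box\chi$.
\end{enumerate}

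\textbf{Main obstacle.} The delicate step is Necessitation. Lemma~\ref{lem:necessitation} relies on the fact that, for members $\psi\in T_m$, the value $[\![\utilde\nu(\psi)]\!]$ is forced to lie in $\utilde N$. I would first try to justify this from the fact that $T_m$ is closed under necessitation: $\psi\in T_m$ gives $\Box\psi\in T_m$, and hence $\utilde\nu(\Box\psi)\in\utilde\nu(T_m)$. This is then read through $\Vdash\utilde\nu(T_m)$ as the membership statement.

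If that reading requires the members of $\utilde\nu(T_m)$ to be forced outright, I would strengthen the induction to the claim $\Vdash_{\mathsf C_I}\utilde\nu(\chi)$, equivalently $b(\chi)=1$. Under this stronger hypothesis Necessitation only needs $1\in\utilde N$, which holds because $\utilde N$ is forced to be a filter. This version is for theorems of the logic; for nonlogical axioms of $T_m$ it uses that their translations lie in $\utilde\nu(T_m)$ and are, by Lemma~\ref{lem:necessitation}'s assumption, in $\utilde N$. I expect the bookkeeping of which statements are forced outright and which are merely derivable from $\utilde\nu(T_m)$ to be the only genuine subtlety.
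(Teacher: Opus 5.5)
Your proposal is correct and follows the paper's route: the paper gets soundness as an immediate consequence of the preceding lemmas, which amounts to the induction on derivations you describe, using the first-order lemma with Corollary~\ref{cor:first-order-forcing}, the modal-axiom lemma, and Lemma~\ref{lem:necessitation} for Necessitation. The subtlety you flag is real and your fallback handles it correctly: since $\Vdash_{\mathsf C_I}\utilde{N}\subseteq\dot G_{\mathsf C_I}$ gives $[\![b(\psi)\in\utilde{N}]\!]\le b(\psi)$, the step in Lemma~\ref{lem:necessitation} actually needs $b(\psi)=1$ for $\psi\in T_m$ (membership of $\utilde{\nu}(\Box\psi)$ in $\utilde{\nu}(T_m)$ alone does not force it), so the clean invariant is $\Vdash_{\mathsf C_I}\utilde{\nu}(\chi)$, which holds provided the nonlogical axioms of $T_m$ are those of $T$.
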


Before considering the completeness problem,  we prove the following lemma.
\begin{lemma}
\label{lem:henkin-transport}
Let $\pi:\mathsf B_m\to\mathsf B$ be a Boolean algebra
isomorphism and let
\[
h:\operatorname{St}(\mathsf B)\longrightarrow
\operatorname{St}(\mathsf B_m)
\]
be the induced Stone homeomorphism. Suppose that $I_m$ is the
transport of $I$ by $h$. Then there is an induced isomorphism
\[
\widehat h:\mathsf C_I\longrightarrow\mathsf C_{m,I_m}
\]
between the corresponding Boolean completions. Moreover, if
$\utilde N$ and $\utilde J$ are $\mathsf C_I$-names, then there are
canonically associated $\mathsf C_{m,I_m}$-names
$\widehat h(\utilde N)$ and $\widehat h(\utilde J)$ such that for every
sentence $\theta\in\mathcal L_m$,
\[
\Vdash_{\mathsf C_I}
``\nu_{\utilde N,\utilde J}(\theta)"
\]
if and only if
\[
\Vdash_{\mathsf C_{m,I_m}}
``\nu_{\widehat h(\utilde N),\widehat h(\utilde J)}
(\theta)".
\]
\end{lemma}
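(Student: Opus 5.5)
The plan is to build everything out of the homeomorphism $h$ and then push names along the induced isomorphism. First, since $h:\operatorname{St}(\mathsf B)\to\operatorname{St}(\mathsf B_m)$ is a homeomorphism, it maps Borel sets to Borel sets. We define $I_m=\{h[X]:X\in I\}$, which is a proper $\sigma$-ideal on $\mathcal B(\operatorname{St}(\mathsf B_m))$. Because $h$ sends clopen sets to clopen sets (indeed $h^{-1}[N_b]=N_{\pi(b)}$ for $b\in\mathsf B_m$), we get $I_m\cap\mathsf{CLOP}(\operatorname{St}(\mathsf B_m))=\{\emptyset\}$, so the construction of Section~\ref{Sec:Trans} applies on the $m$-side.

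Next we obtain $\widehat h$. The map $X/I\mapsto h[X]/I_m$ is well defined and is a Boolean isomorphism $\mathsf A_I\to\mathsf A_{m,I_m}$, because $X\triangle Y\in I$ iff $h[X]\triangle h[Y]\in I_m$. It therefore restricts to an order isomorphism $\mathsf P_I\to\mathsf P_{m,I_m}$, and by uniqueness of Boolean completions (the isomorphism of $\mathsf{RO}$ of isomorphic separative posets) it extends uniquely to $\widehat h:\mathsf C_I\to\mathsf C_{m,I_m}$. One should also record the compatibility $\widehat h([\varphi]_I)=N_{\pi^{-1}([\varphi])}/I_m$, so that $\widehat h\circ\iota=\iota_m\circ\pi^{-1}$.

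For the names, we extend $\widehat h$ to all names by $\in$-recursion: $\widehat h(\utilde X)=\{\langle\widehat h(\utilde Y),\widehat h(p)\rangle:\langle\utilde Y,p\rangle\in\utilde X\}$. The standard fact about isomorphisms of complete Boolean algebras is
\[
\widehat h\bigl([\![\sigma(\utilde X_1,\ldots,\utilde X_n)]\!]_{\mathsf C_I}\bigr)=[\![\sigma(\widehat h(\utilde X_1),\ldots,\widehat h(\utilde X_n))]\!]_{\mathsf C_{m,I_m}}
\]
for every $\mathcal L_\in$-formula $\sigma$, provided the ground-model parameters (check names) are mapped suitably, e.g.\ $\check{\mathsf C_I}\mapsto\check{\mathsf C}_{m,I_m}$ via $\widehat h$. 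This is proved by induction on the recursive clauses for $[\![\cdot]\!]$ listed in the preliminaries; the quantifier clauses use that $\widehat h$ is a bijection of the name universes and preserves arbitrary joins. In particular $\widehat h(\dot G_{\mathsf C_I})$ is the canonical generic name, $\widehat h(\utilde D)$ names $\operatorname{St}(\mathsf C_{m,I_m})$, $\widehat h(\utilde N)$ is forced to be a filter contained in the generic, and $\widehat h(\utilde J)$ is forced to map constants into the Stone space. Hence these are admissible parameters.

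The core step is an induction on $\theta$ (in fact on formulas with name parameters $\utilde{\vec a}$) proving
\[
\widehat h\bigl(b_{\utilde N,\utilde J}(\theta,\utilde{\vec a})\bigr)=b_{\widehat h(\utilde N),\widehat h(\utilde J)}(\theta,\widehat h(\utilde{\vec a})).
\]
The atomic cases follow from the transfer of Boolean values together with the fact that the structure of Lemma~\ref{lem:stone-model} is defined canonically from $\dot G$, $\utilde D$ and $\utilde J$, so that $\widehat h(\utilde{\mathcal M}_J)=\utilde{\mathcal M}_{\widehat h(J)}$. The cases for $\neg$ and $\wedge$ are immediate. The $\forall$ case uses that $\widehat h$ preserves infinite meets and that $\operatorname{dom}(\widehat h(\utilde D))=\widehat h[\operatorname{dom}(\utilde D)]$. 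The $\Box$ case is again the transfer of $[\![\,b\in\utilde N\,]\!]$.

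Applying this to sentences, $[\![\nu(\theta)]\!]=[\![b\in\dot G]\!]=b$, and $\widehat h$ maps $1$ to $1$. Hence $\Vdash_{\mathsf C_I}\nu_{\utilde N,\utilde J}(\theta)$ iff $\Vdash_{\mathsf C_{m,I_m}}\nu_{\widehat h(\utilde N),\widehat h(\utilde J)}(\theta)$.

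I expect the main obstacle to be the atomic and first-order part, namely ensuring that the canonical structure is transported correctly. Lemma~\ref{lem:first-order-value} identifies $b(\varphi)$ with $[\varphi]_I$, while on the other side the relevant value is indexed through $\pi$. So one must check that $\widehat h([\varphi]_I)$ agrees with the value the $m$-side assigns to $\varphi$; this holds by the compatibility $\widehat h\circ\iota=\iota_m\circ\pi^{-1}$, assuming $\pi$ is chosen to respect the sentences in the sense needed. I would handle this by stating explicitly that the structure names are defined by the same formula from transported parameters, so no separate choice intervenes.
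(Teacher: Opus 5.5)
Your proposal follows the paper's own argument. You transport $I$ along $h$, pass from the Borel quotient to the completions to get $\widehat h$, transport names recursively, and prove by induction on $\theta$ that $\widehat h$ carries $b_{\utilde N,\utilde J}(\theta)$ to the corresponding $m$-side value. Your decision to build the $m$-side structure from the transported parameters is exactly what the paper's ``the atomic case follows from the definition'' tacitly relies on; the alternative of asking $\pi$ to fix the classes of $\mathcal L$-sentences would force every $\mathcal L_m$-sentence to be $T_m$-equivalent to an $\mathcal L$-sentence, so it is not available in general.

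One point needs tightening, and the paper glosses over it in the same way. Plain coefficient replacement fixes check names, so it sends $\dot G_{\mathsf C_I}$ to a name for $\widehat h^{-1}[\dot G_{\mathsf C_{m,I_m}}]$, not to the canonical generic name. The admissible $\widehat h(\utilde N)$ and $\widehat h(\utilde J)$ are obtained by also applying the ground-model map $\widehat h$ to the named objects. That is what your ``mapped suitably'' proviso must mean, and it should be made explicit.
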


\begin{proof}
The homeomorphism $h$ induces an isomorphism of Borel algebras and,
by the definition of $I_m$, an isomorphism of the quotient Boolean
algebras. Passing to regular open completions gives
$\widehat h$. Boolean-valued names are transported recursively by
replacing every Boolean coefficient $b$ appearing in a name by
$\widehat h(b)$.

The assertion is proved by induction on the complexity of $\theta$.
The atomic case follows from the definition.
The Boolean connectives are preserved because $\widehat h$ is a
Boolean algebra isomorphism. The quantifier step follows also immediately.
Finally,
$
\nu(\Box\theta)=[[\nu(\theta)]]\in\utilde N
$
is preserved because both Boolean values and the filter name are
transported by the same Boolean algebra isomorphism.
\end{proof}

We now prove the converse, namely the completeness theorem.

\begin{theo}[Completeness] \label{theo:completeness}
	Assume $\varphi\in \mathcal{L}_m$ and for all $\utilde{N}$ and
	$\utilde{J}$,
	\[
	\Vdash_{\mathsf{C}_I}``\nu_{\utilde{N}, \utilde{J}}(T_m)
	\vdash \nu_{\utilde{N}, \utilde{J}}(\varphi)".
	\]
	Then $T_m\vdash\varphi$.
\end{theo}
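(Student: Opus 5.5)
We argue by contraposition: assuming $T_m\nvdash\varphi$, we will construct names $\utilde N$ and $\utilde J$ such that $\Vdash_{\mathsf C_I}``\nu_{\utilde N,\utilde J}(T_m)\vdash\nu_{\utilde N,\utilde J}(\varphi)"$ fails. The idea is to exploit Lemma~\ref{lem:henkin-transport}. By Lemma~\ref{iso} and Remark~\ref{hom}, both $\mathsf B=\mathsf B_T$ and $\mathsf B_m=\mathsf B_{T_m}$ are countable atomless Boolean algebras, so there is an isomorphism $\pi:\mathsf B_m\to\mathsf B$. Lemma~\ref{lem:henkin-transport} then lets us move freely between $\mathsf C_I$ and $\mathsf C_{m,I_m}$. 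In the latter algebra, the modal sentences themselves have nonzero Boolean values: $[\theta]_{I_m}=N_{[\theta]_{T_m}}/I_m$. Since $I_m\cap\mathsf{CLOP}=\{\emptyset\}$ (transported from $I$), we get $[\neg\varphi]_{I_m}\neq0$ whenever $T_m\nvdash\varphi$.

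\textbf{Step 1: choose the names.} Work with $\mathsf C_{m,I_m}$. Let $\utilde J$ send each constant to the generic ultrafilter (or to any fixed point of $\utilde D$). Take $\utilde N$ to be the canonical name for the filter generated by $\{[\Box\theta]_{I_m}\text{-witnessed } [\theta]_{I_m} : \dots\}$. Concretely, we want
\[\Vdash``\utilde N=\{c\in\mathsf C_{m,I_m}: \exists\theta\,([\Box\theta]_{I_m}\in\dot G\wedge [\theta]_{I_m}\le c)\}".\]
Because $T_m\vdash\Box\theta\to\theta$ and the $\mathsf K$-axiom holds, this set is forced to be a filter contained in $\dot G$. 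It is closed under meets since $\Box\theta_1\wedge\Box\theta_2\to\Box(\theta_1\wedge\theta_2)$ is provable.

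\textbf{Step 2: truth lemma.} We show by induction on $\theta\in\mathcal L_m$ that
\[b_{\utilde N,\utilde J}(\theta)=[\theta]_{I_m}.\]
The first-order cases go as in Lemma~\ref{lem:stone-model} and Lemma~\ref{lem:first-order-value}. The Henkin assumption on $T$ handles the quantifier clause: the supremum over witnesses is attained at the Henkin constant. For the $\Box$ case, the induction hypothesis gives $b(\Box\theta)=[\![[\theta]_{I_m}\in\utilde N]\!]$, and we must show that this equals $[\Box\theta]_{I_m}$. The inequality $\ge$ is immediate from the definition of $\utilde N$. For $\le$, if $[\theta]_{I_m}\ge[\theta']_{I_m}$ with $[\Box\theta']_{I_m}\in G$, then injectivity of $\iota$ gives $T_m\vdash\theta'\to\theta$, and necessitation plus $\mathsf K$ yields $[\Box\theta']\le[\Box\theta]$.

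\textbf{Step 3: conclude.} By the truth lemma, every $\psi\in T_m$ has $[\![\nu(\psi)]\!]=1$, whereas $[\![\nu(\varphi)]\!]=[\varphi]_{I_m}<1$. Hence any condition below $[\neg\varphi]_{I_m}$ forces $\nu(T_m)$ together with $\neg\nu(\varphi)$. In the generic extension this set of sentences is then true, so $\nu(T_m)\nvdash\nu(\varphi)$ there. We then transport $\utilde N,\utilde J$ back to $\mathsf C_I$ via $\widehat h^{-1}$ using Lemma~\ref{lem:henkin-transport}, which contradicts the hypothesis.

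\textbf{Main obstacle.} The main obstacle is the $\Box$ step of Step 2, together with the interaction between $\utilde J$ and the Henkin quantifier clause. The $\Box$ case requires that membership in $\utilde N$ depend only on the Boolean value, which is why $\utilde N$ is defined via values $[\theta]_{I_m}$ rather than via syntax. The quantifier case requires that the domain $\utilde D$ does not add new witnesses beyond the Henkin constants. I would try to handle the latter by letting $\utilde J$ interpret the constants so that $\utilde D$'s elements are definable from constants, or else by reading $\forall$ over named elements only.
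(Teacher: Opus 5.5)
\textbf{Verdict: genuine gap.} Your Step~2 truth lemma, which carries the whole argument, fails to go through at quantifiers over modal formulas, and you flag this case but do not close it.

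Your handling of the $\Box$ clause is fine for sentences. Injectivity of $\iota$, together with necessitation and $\mathsf K$, gives $[\![[\theta]_{I_m}\in\utilde N]\!]=[\Box\theta]_{I_m}$, and the $\mathsf T$ axiom gives $\utilde N\subseteq\dot G$. The trouble is the quantifier case. By Definition~\ref{def:translation0}, for $\theta=\forall x\,\psi(x)$,
\[
b(\theta)=\bigwedge_{\utilde u\in\operatorname{dom}(\utilde D)}\bigl(\utilde D(\utilde u)\to b(\psi,\utilde u)\bigr).
\]
So once $\psi$ contains $\Box$, the induction must control $b(\Box\chi,\utilde u)=[\![b(\chi,\utilde u)\in\utilde N]\!]$ for arbitrary parameter names $\utilde u$. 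For these there is no sentence value to compare with.

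\emph{First inequality.} You need $[\theta]_{I_m}\le b(\psi,\utilde u)$ for every $\utilde u$, including points of $\utilde D$ not of the form $\utilde J(c)$. Since $\utilde D$ is forced to be the set of all ultrafilters on the algebra, it has uncountably many such points in the extension, while there are only countably many constants. For such $\utilde u$, $b(\chi,\utilde u)$ is just some element of the algebra. Your $\utilde N$ contains it only if it lies above some $[\theta']_{I_m}$ with $[\Box\theta']_{I_m}\in\dot G$. The obvious candidate $\theta'=\forall x\,\chi$ needs $T_m\vdash\forall x\,\Box\chi\to\Box\forall x\,\chi$. That is a Barcan instance, which $\mathsf{FOT}$ does not prove and which you have not derived for $T_m$.

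\emph{Second inequality.} This one is not settled either. The Henkin constants of $T$ are attached to $\mathcal L$-formulas only, so for a modal body $\psi$ you still need a constant $c$ with $T_m\vdash\psi(c)\to\forall x\,\psi(x)$. Without it you only get $b(\theta)\le\bigwedge_c[\psi(c)]_{I_m}$, an infinite meet in $\mathsf C_{m,I_m}$ with no reason to equal $[\theta]_{I_m}$.

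Neither remedy you suggest is available:
\begin{enumerate}
\item $\utilde D=\operatorname{\utilde{St}}(\mathsf C_I)$ is fixed by the setup, and $\utilde J$ names only countably many of its points. So you cannot make every element of $\utilde D$ definable from constants.
\item Quantifying only over named elements changes Definition~\ref{def:translation0}. You would then be proving completeness for a different translation.
\end{enumerate}
There is also a problem with your choice of $\utilde J$. Sending every constant to $\dot G$ gives $b(c=d)=[\![\utilde J(c)=\utilde J(d)]\!]=1$ even when $T\vdash c\neq d$, for example $0$ and the Henkin witness of $\exists x\,(x=S0)$. So the first-order base case you import cannot hold for that $\utilde J$.

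For comparison, the paper's proof never computes $b_{\utilde N,\utilde J}$ on genuinely modal sentences. It uses the Cantor--Tarski isomorphism $\pi$ to replace $\neg\varphi$ by the first-order sentence $(\neg\varphi)^\pi$, whose value $a_I\neq 0$ comes from Lemma~\ref{lem:first-order-value}. It then takes $\utilde N^*=\utilde U^*\cap\dot G_{\mathsf C_I}$ for an ultrafilter name $\utilde U^*$ containing $a_I$, and pulls the names back along $\widehat\pi$. The only inductive value computation it relies on is the first-order one behind Lemma~\ref{lem:stone-model}. Your canonical-filter route needs a full modal truth lemma with parameters, and that is precisely what is missing.
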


\begin{proof}
	We prove the contrapositive. Assume that
	$T_m\nvdash\varphi$. By Lemma \ref{iso} and the Cantor--Tarski theorem, there is a Boolean
	algebra isomorphism
	\[
	\pi:\mathsf B_m\longrightarrow\mathsf B .
	\]

	Since $T_m\nvdash\varphi$, we have
$
	[\neg\varphi]_m\neq 0_{\mathsf B_m}.
$
	Therefore
$
	\pi([\neg\varphi]_m)=[(\neg\varphi)^\pi]\neq0_{\mathsf B}.
$
	Let
$
	a=\pi([\neg\varphi]_m).
$

	The Stone dual of $\pi$ is a homeomorphism
	\[
	h:\operatorname{St}(\mathsf B)\longrightarrow
	\operatorname{St}(\mathsf B_m)
	\]
	given by
$
	h(U)=\pi^{-1}[U].
$
	Let $I_m$ be the
	transport of $I$ through $h$, so 
\[X\in I_m\iff h[X]\in I .\]
	Then $h$ induces an isomorphism of quotient forcing notions
	\[
	\bar h:
	\mathcal B(\operatorname{St}(\mathsf B))/I
	\longrightarrow
	\mathcal B(\operatorname{St}(\mathsf B_m))/I_m .
	\]
	Consequently the regular open completions are isomorphic. Denote the
	induced Boolean algebra isomorphism by
	\[
	\widehat h:\mathsf C_I\longrightarrow\mathsf C_{m,I_m}.
	\]

 Since $a>0$, the corresponding quotient element
$
	a_I=N[(\neg\varphi)^\pi]/I
$
	is non-zero. Hence we may choose a $\mathsf C_I$-name
	$\utilde U^*$ such that
	\[
	\Vdash_{\mathsf C_I}
	``a_I\in\utilde U^*\in
	\utilde{\operatorname{St}}(\mathsf C_I)".
	\]
	Define
$
	\Vdash_{\mathsf C_I}
	``\utilde N^*=
	\utilde U^*\cap\dot G_{\mathsf C_I}".
$
	By Lemma \ref{lem:first-order-value}, 
	\[
	\Vdash_{\mathsf C_I}
	\left[
	\left[
	\nu_{\utilde N^*,\utilde J}
	((\neg\varphi)^\pi)
	\right]
	\right]
	=
	a_I .
	\]
	Hence
$
	\Vdash_{\mathsf C_I}
	``\nu_{\utilde N^*,\utilde J}
	((\neg\varphi)^\pi)",
$
and in particular,
	\[
	\Vdash_{\mathsf C_I}
	``\nu_{\utilde N^*,\utilde J}(T)
	\nvdash
	\nu_{\utilde N^*,\utilde J}(\varphi^\pi)".
	\]
Let
$
	\widehat\pi:\mathsf C_{m,I_m}\longrightarrow\mathsf C_I
$
	be the induced isomorphism of Boolean completions, then the inverse
	image of $\utilde N^*$ and $\utilde J$ gives $\mathsf C_{m,I_m}$-names
	$\utilde N,\utilde J$ satisfying
	\[
	\Vdash_{\mathsf C_{m,I_m}}
	``\nu_{\utilde N,\utilde J}(T_m)
	\nvdash
	\nu_{\utilde N,\utilde J}(\varphi)".
	\]
	Thus there exist names for which the translated consequence fails.
	This contradicts the assumption of the theorem.
\end{proof}

Note that we indeed proved the following stronger result.

\begin{theo}\label{com}
	Suppose $\varphi\in \mathcal{L}_m$ and $T_m \nvdash \varphi$. 	
	Then there is a $\mathsf{C}$-name $\utilde{\mathcal{U}}$ such that
	\begin{enumerate}
		\item $\Vdash_{\mathsf{C}_I}$ `` $\utilde{\mathcal{\dot{U}}}$ is an ultrafilter on $\mathsf{C}_I$";
		\item If $\utilde{N}$ is such that
		$\Vdash_{\mathsf{C}_I} ``\utilde{N}= \utilde{\mathcal{U}}\cap \dot{G}_{\mathsf{C}_I}"$, then we have
	\[\Vdash_{\mathsf{C}_I}  ``\nu_{\utilde{N}, J}(T_m)  \nvdash \nu_{\utilde{N}, J}(\varphi)".\]
\end{enumerate}
\end{theo}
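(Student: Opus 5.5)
The plan is to extract Theorem~\ref{com} from the proof of Theorem~\ref{theo:completeness}, isolating the ultrafilter name that was built there. Assume $T_m\nvdash\varphi$, so $[\neg\varphi]_m\neq 0_{\mathsf B_m}$. By Lemma~\ref{iso}, both $\mathsf B_m=\mathsf B_{T_m}$ and $\mathsf B=\mathsf B_T$ are countable atomless Boolean algebras. By the Cantor--Tarski theorem there is an isomorphism $\pi:\mathsf B_m\to\mathsf B$. We transport $I$ along the Stone dual $h$ to $I_m$, which gives the isomorphism $\widehat h:\mathsf C_I\to\mathsf C_{m,I_m}$. By Lemma~\ref{lem:henkin-transport}, it suffices to produce the required name on one side and pull it back through $\widehat h$. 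Forcedness of translations is invariant under this transport, and an ultrafilter name is carried to an ultrafilter name, because $\widehat h$ is a Boolean isomorphism and names are transported coefficientwise.

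For the construction, set $a_I=\iota(\pi([\neg\varphi]_m))\in\mathsf C_I$. This element is nonzero because $\iota$ is injective, which uses the hypothesis $I\cap\mathsf{CLOP}=\{\emptyset\}$. In $V$ the set $\{a_I\}$ has the finite intersection property, so by the ultrafilter lemma (applied in the ground model, with the result then regarded as a check name) we fix an ultrafilter $U^*\supseteq\{a_I\}$ on $\mathsf C_I$. We then take $\utilde{\mathcal U}=\check U^*$. We must check that being an ultrafilter on $\mathsf C_I$ is absolute to $V[G]$. The filter axioms and the condition $b\in U$ or $\neg b\in U$ are $\Delta_0$ statements in the parameters $U^*$ and $\mathsf C_I$, so they hold in every extension. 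This gives item (1). If one instead wanted a genuinely generic object, the fallback is to pick $\utilde U^*$ by the maximum principle, exactly as in the proof of Theorem~\ref{theo:completeness}.

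For item (2), let $\utilde N$ be forced to equal $\utilde{\mathcal U}\cap\dot G_{\mathsf C_I}$. Since $\dot G$ and $\utilde{\mathcal U}$ are filters, $\utilde N$ is forced to be a filter contained in $\dot G$. It is therefore admissible, and the soundness lemmas apply to it. By Lemma~\ref{lem:first-order-value}, $[\![\nu((\neg\varphi)^\pi)]\!]=a_I$. Transported back to $\mathsf C_{m,I_m}$, this says that $\nu(\neg\varphi)$ holds with value $\widehat h(a_I)$. The step I expect to be the main obstacle is upgrading this to the full forcing statement $\Vdash\text{``}\nu(T_m)\nvdash\nu(\varphi)\text{''}$.

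I would handle that step as follows. Suppose toward a contradiction that some condition $p$ forces a derivation from a finite $F\subseteq T_m$. I would compare the Boolean values $[\![\bigwedge_{\psi\in F}\nu(\psi)]\!]\le[\![\nu(\varphi)]\!]$. Every $\nu(\psi)$ with $\psi\in T_m$ has value $1$, by Corollary~\ref{cor:first-order-forcing} together with the soundness argument. Hence $[\![\nu(\varphi)]\!]=1$ would follow, contradicting $[\![\nu(\neg\varphi)]\!]=\widehat h(a_I)\neq0$. That contradiction shows the derivation cannot be forced below any $p$, and this yields the forced nonderivability. Under the identification through $\pi$, the assignment $\theta\mapsto[\![\nu(\theta)]\!]$ is the embedding $[\theta]_m\mapsto\iota\pi([\theta]_m)$. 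That identification is the crux: it is what lets the modal values computed with $\utilde N$ be read off from the Lindenbaum algebra, and it is the step I would verify most carefully.
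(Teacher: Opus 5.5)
Your outline follows the paper's own route: the Cantor--Tarski isomorphism $\pi$, the Stone dual $h$, the transported ideal $I_m$, an ultrafilter through $a_I$, $\utilde N=\utilde{\mathcal U}\cap\dot G_{\mathsf C_I}$, Lemma~\ref{lem:first-order-value}, and a pull-back. Your check-name ultrafilter does give item~(1) by absoluteness. But the step you flag as the crux is a genuine gap, and it is in fact false. Lemma~\ref{lem:first-order-value} computes only the value of the $\mathcal L$-sentence $(\neg\varphi)^\pi$. Lemma~\ref{lem:henkin-transport} carries $\nu_{\utilde N,\utilde J}(\theta)$ to $\nu_{\widehat h(\utilde N),\widehat h(\utilde J)}(\theta)$ only for the \emph{same} $\theta$. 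Nothing turns information about $(\neg\varphi)^\pi$ into information about $\neg\varphi$. The value $[\![\nu(\neg\varphi)]\!]$ is fixed by the recursion of Definition~\ref{def:translation0}, where every $\Box$ is evaluated through $\utilde N$, while $\pi$ is an arbitrary isomorphism that ignores $\Box$. Concretely, with $\utilde N=\check U^*\cap\dot G_{\mathsf C_I}$ you get $[\![\nu(\Box\theta)]\!]=x$ if $x=[\![\nu(\theta)]\!]\in U^*$, and $[\![\nu(\Box\theta)]\!]=0$ otherwise. Now pick an $\mathcal L$-sentence $\chi$ independent of $T$ with $[\chi]_I\notin U^*$. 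Then $[\![\nu(\neg\Box\chi)]\!]=1$. Yet for the least normal extension $T_m$ we have $T_m\nvdash\neg\Box\chi$, since erasing boxes would give $T\vdash\neg\chi$. Hence $\iota\pi([\neg\Box\chi]_m)\neq 1$, and $\theta\mapsto[\![\nu(\theta)]\!]$ is not $\iota\circ\pi$. The paper's proof makes the same unjustified move when it pulls back $\utilde N^*$ through $\widehat\pi$, and it also passes from $[\![\cdot]\!]=a_I$ to $\Vdash$. So it does not supply the missing step either.

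A different choice of $\utilde{\mathcal U}$, for instance a generic one via the maximum principle, cannot repair this. For any name $\utilde{\mathcal U}$ forced to be an ultrafilter, $u(x)=[\![\check x\in\utilde{\mathcal U}]\!]$ is a Boolean endomorphism of $\mathsf C_I$, and $[\![\nu(\Box\theta)]\!]=u(x)\wedge x$. Hence $[\![\nu(\Box\theta\vee\Box\neg\theta)]\!]=\neg e(x)$, where $e(x)=u(x)\oplus x$ satisfies $e(x\oplus y)=e(x)\oplus e(y)$ and $e(\neg x)=e(x)$. Since $e(x)\wedge e(y)\wedge(e(x)\oplus e(y))=0$, the sentence $\bigvee_{\theta\in\{\psi,\chi,\psi\leftrightarrow\chi\}}(\Box\theta\vee\Box\neg\theta)$ gets value $1$ for every $\utilde{\mathcal U}$ and every $\utilde J$. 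It is not even an $\mathsf{S5}$-theorem: it fails in a three-world cluster realizing $\psi\wedge\chi$, $\psi\wedge\neg\chi$, $\neg\psi\wedge\chi$. So it is unprovable in the least normal extension once these three conjunctions are consistent with $T$. For this $\varphi$, item~(2) fails whatever $\utilde{\mathcal U}$ you choose, so the missing step is an obstruction, not a pending verification. A smaller point: from $p\Vdash$ ``$\bigwedge_{\psi\in F}\nu(\psi)\to\nu(\varphi)$'' you only get $p\le[\![\nu(\varphi)]\!]$, not $[\![\nu(\varphi)]\!]=1$. Your contradiction therefore shows at most that $1$ does not force derivability, not that non-derivability is forced, unless you fix a reading of $\vdash$ under which it is absolute.
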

\section{Kripke frames and models} \label{Sec:Kripke}

In this section we associate Kripke frames with the forcing
interpretation developed above. The main point is to show that the
Boolean-valued interpretation of the modal operator agrees with the
usual Kripke interpretation over the accessibility relation induced by
the filter names.

\begin{definition}[Kripke frames and models]
Let $\mathbb L$ be the collection of all $\mathsf C_I$-names $\utilde N$
such that for some $\mathsf C_I$-name $\utilde{\mathcal U}$ as in
Theorem~\ref{com},
\[
\Vdash_{\mathsf C_I}
``\utilde N=\utilde{\mathcal U}\cap\dot G_{\mathsf C_I}".
\]
For $\utilde N\in\mathbb L$ and a name $\utilde J$, define
\[
\mathfrak M(\utilde N,\utilde J)
=
\langle\mathcal W(\utilde N),
\triangleleft_{\utilde N},
\{\mathcal M_G:G\in\mathcal W(\utilde N)\}\rangle .
\]

The set of worlds is
\[
\mathcal W(\utilde N)=
\{V[G]:G\text{ is a }\mathsf C_I\text{-ultrafilter}\}.
\]

For $G,H\in\mathcal W(\utilde N)$ we put
\[
V[G]\triangleleft_{\utilde N}V[H]
\]
iff for every modal formula $\varphi(\vec x)$ and every tuple
of names $\utilde{\vec a}$,
\[
V[G]\models
\nu_{\utilde N,\utilde J}(\varphi)
(\utilde{\vec a}[G])
\Longrightarrow
V[H]\models
\nu_{\utilde N,\utilde J}(\varphi)
(\utilde{\vec a}[H]).
\]

For each ultrafilter $G$, let
\[
\mathcal M_G=
\langle
\operatorname{\utilde{St}}(P_I)[G],
(f_G)_{f\in\mathcal L},
(R_G)_{R\in\mathcal L},
(c_G)_{c\in\mathcal L}
\rangle .
\]
be defined as in Lemma \ref{lem:stone-model}.
The associated Kripke frame is
\[
\mathfrak F(\utilde N,\utilde J)
=
\langle
\mathcal W(\utilde N),
\triangleleft_{\utilde N},
D
\rangle ,
\]
where
\[
D=
\{\operatorname{\utilde{St}}(P_I)[G]:G\in\mathcal W(\utilde N)\}.
\]
\end{definition}

\begin{definition}
For every $\utilde N\in\mathbb L$, define
\[
V[G]\Vdash_{\utilde N}\varphi(\utilde{\vec a})
\iff
V[G]\models
\nu_{\utilde N,\utilde J}(\varphi)
(\utilde{\vec a}[G]).
\]
\end{definition}

\begin{lemma}\label{lem:section5-induction}
For every formula $\varphi$ of $\mathcal L_m$,
\[
V[G]\Vdash_{\utilde N}\varphi
\iff
\mathfrak M(\utilde N,\utilde J),V[G]\models\varphi .
\]
\end{lemma}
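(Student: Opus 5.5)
We argue by induction on the complexity of $\varphi$, proving the statement simultaneously for all worlds $V[G]\in\mathcal W(\utilde N)$ and all tuples of names $\utilde{\vec a}$ for elements of $\utilde D$. Throughout we use that, by Definition~\ref{def:translation}, $V[G]\models\nu_{\utilde N,\utilde J}(\varphi)(\utilde{\vec a}[G])$ means exactly $b(\varphi,\utilde{\vec a})\in G$.

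For atomic $\varphi$ we rely on Lemma~\ref{lem:stone-model}. The structure $\mathcal M_G$ is the evaluation of $\utilde{\mathcal M}_J$ at $G$. Its relations and functions were defined by the condition that the corresponding Boolean value lies in $\dot G_{\mathsf C_I}$. Hence $\mathcal M_G\models R(\vec t)(\utilde{\vec a}[G])$ holds iff $b(R(\vec t),\utilde{\vec a})\in G$, and the same holds for equations $t_1=t_2$.

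The propositional steps are routine because $G$ is an ultrafilter. We have $\neg b\in G$ iff $b\notin G$, and $b\wedge c\in G$ iff $b\in G$ and $c\in G$.

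For $\forall x\,\psi$, we use the clause $b(\forall x\,\psi,\utilde{\vec a})=\bigwedge_{\utilde u}(\utilde D(\utilde u)\to b(\psi,\utilde{\vec a}[x\mapsto\utilde u]))$. Suppose first that this meet lies in $G$. Every element of $D_G=\utilde D[G]$ has the form $\utilde u[G]$ with $\utilde D(\utilde u)\in G$. Since the meet lies below each $\utilde D(\utilde u)\to b(\psi,\ldots)$ and $G$ is a filter, $b(\psi,\utilde{\vec a}[x\mapsto\utilde u])\in G$, and the induction hypothesis gives $\mathcal M_G\models\psi$ at $\utilde u[G]$.

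The converse direction needs that $G$ respects this infinite meet. We plan to handle it with the fullness (mixing) property of $V^{\mathsf C_I}$. If the meet is not in $G$, mixing yields a single name $\utilde u$ with $\utilde D(\utilde u)\to b(\psi,\ldots[x\mapsto\utilde u])$ equal to the meet. This $\utilde u$ is then a counterexample in $\mathcal M_G$. The step is clean when $G$ is $V$-generic; for arbitrary ultrafilters we would restrict the statement to generic $G$, or note that the worlds of $\mathcal W(\utilde N)$ are understood as generic extensions.

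The modal step is the main obstacle. By definition $b(\Box\psi,\utilde{\vec a})=[\![b(\psi,\utilde{\vec a})\in\utilde N]\!]$, so we must show
\[
[\![b(\psi,\utilde{\vec a})\in\utilde N]\!]\in G\iff \forall H\,\bigl(V[G]\triangleleft_{\utilde N}V[H]\Rightarrow b(\psi,\utilde{\vec a})\in H\bigr).
\]

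For $(\Rightarrow)$, note that $\utilde N\subseteq\dot G_{\mathsf C_I}$ gives $b(\Box\psi)\le b(\psi)$, as in the proof of the $\Box\varphi\to\varphi$ axiom. Also $V[G]\models\nu(\Box\psi)$, and $\triangleleft_{\utilde N}$ preserves all translated formulas, so $V[H]\models\nu(\Box\psi)$. Therefore $b(\Box\psi)\in H$, hence $b(\psi)\in H$, and the induction hypothesis at $V[H]$ finishes this direction.

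For $(\Leftarrow)$ we argue by contraposition. Assume $b(\Box\psi)\notin G$, and let $\Sigma_G$ be the set of Boolean values $b(\chi,\utilde{\vec c})$ that lie in $G$. We want an ultrafilter $H$ containing $\Sigma_G\cup\{\neg b(\psi,\utilde{\vec a})\}$, which gives $V[G]\triangleleft_{\utilde N}V[H]$ and $V[H]\not\models\nu(\psi)$.

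By compactness of the Stone space it suffices to check that this family has the finite intersection property. Since $\Sigma_G$ is closed under meets (take $\chi$ to be a conjunction), this reduces to: $c\in\Sigma_G$ implies $c\not\le b(\psi)$. If instead $c\le b(\psi)$ with $c\in G$, we aim for the contradiction $b(\Box\psi)\in G$. The plan is to take $c$ to be a value of the form $b(\Box\chi)$, pick $\chi$ so that $b(\chi)\le b(\psi)$, and use upward closure of $\utilde N$. The remaining worry is that $\Sigma_G$ need not consist of $\Box$-values. We would exploit that $\utilde N=\utilde{\mathcal U}\cap\dot G$ with $\utilde{\mathcal U}$ as in Theorem~\ref{com}, and we expect to need this structural choice of $\utilde N$ at precisely this point.
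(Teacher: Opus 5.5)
Your atomic and propositional steps are fine. The $(\Rightarrow)$ half of the modal step is also fine: arguing via $b(\Box\psi,\utilde{\vec a})\le b(\psi,\utilde{\vec a})$ and preservation of $\nu(\Box\psi)$ along $\triangleleft_{\utilde N}$ is more careful than the paper's unargued ``$\utilde N[G]\subseteq H$''.

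\textbf{The gap in the $(\Leftarrow)$ half.} This is where the proof breaks, and your route cannot close it. Since $\neg\chi$ is itself a modal formula, and $V[G]\models\nu(\neg\chi)(\utilde{\vec c}[G])$ iff $b(\chi,\utilde{\vec c})\notin G$, applying the definition of $\triangleleft_{\utilde N}$ to $\neg\chi$ gives the reverse implication. So $V[G]\triangleleft_{\utilde N}V[H]$ holds iff $G$ and $H$ contain exactly the same values $b(\chi,\utilde{\vec c})$. Consequences:
\begin{itemize}
\item If $b(\psi,\utilde{\vec a})\in G$, this value already lies in $\Sigma_G$, and $\Sigma_G\cup\{\neg b(\psi,\utilde{\vec a})\}$ is inconsistent.
\item Your reduction, applied with $c=b(\psi,\utilde{\vec a})$, then demands $b(\psi,\utilde{\vec a})\in G\Rightarrow b(\Box\psi,\utilde{\vec a})\in G$, i.e.\ $\psi\to\Box\psi$.
\end{itemize}

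\textbf{The shape of $\utilde N$ does not supply this.} Let $\psi$ be an $\mathcal L$-sentence independent of $T$, and let $\utilde{\mathcal U}$ be forced to be an ultrafilter containing $[\neg\psi]_I$. This is the kind of name produced in the proof of Theorem~\ref{theo:completeness}; put $\utilde N=\utilde{\mathcal U}\cap\dot G_{\mathsf C_I}$.
\begin{itemize}
\item Then $b(\Box\psi)=[\![[\psi]_I\in\utilde N]\!]=0$.
\item For generic $G\ni[\psi]_I$, every world accessible from $V[G]$ contains $[\psi]_I=b(\psi)$. By the case of the lemma for $\psi$, each such world satisfies $\psi$.
\end{itemize}
So at this $V[G]$ the Kripke side holds for $\Box\psi$ and the forcing side fails. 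With the definitions as given, this case of the lemma is false, not merely unproved.

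\textbf{Comparison with the paper.} The paper's converse is precisely the move you anticipated. It extends $\utilde N[G]$ to $\utilde{\mathcal U}[G]$ and asserts that the corresponding world is accessible from $V[G]$ and refutes $\psi$. In the example above that world is not accessible, since $[\psi]_I\in G$ but $[\neg\psi]_I\in\utilde{\mathcal U}[G]$. So the paper's argument has the same defect. Your finite-intersection argument would succeed if the witness only had to contain the filter $\utilde N[G]$, which omits $b(\psi,\utilde{\vec a})$. But $\triangleleft_{\utilde N}$ demands all of $\Sigma_G$, so a correct statement needs a different accessibility relation.

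\textbf{A separate problem in the $\forall$ step.} Beyond the genericity caveat you rightly flagged, your mixing argument tacitly assumes $[\![\utilde u=\utilde u']\!]\wedge b(\psi,\utilde u)\le b(\psi,\utilde u')$. This holds for $\mathcal L$-formulas but fails once $\psi$ contains $\Box$. The values $[\![b(\chi,\utilde u)\in\utilde N]\!]$ and $[\![b(\chi,\utilde u')\in\utilde N]\!]$ concern two different ground elements of $\mathsf C_I$. For example, take $\psi=\Box(x=c)$, $\utilde u=\utilde J(c)$, and $\utilde u'$ equal to $\utilde J(c)$ exactly below some $p$. If $\utilde{\mathcal U}$ is forced to contain $\neg p$, the two values are $1$ and $0$, while $\utilde u[G]=\utilde u'[G]$ for every $G\ni p$. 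So $V[G]\Vdash_{\utilde N}\psi(\utilde u)$ is not even a function of $\utilde u[G]$.
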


\begin{proof}
We argue by induction on the complexity of $\varphi$.
The only non-trivial case is the modal case. Suppose first that
$
V[G]\Vdash_{\utilde N}\Box\psi .
$
By the definition of the translation,
$
[\![\nu_{\utilde N,\utilde J}(\psi)]\!]\in
\utilde N[G].
$
Let
$
V[G]\triangleleft_{\utilde N}V[H].
$
Since $\utilde N[G]\subseteq H$, we obtain
$
[\![\nu_{\utilde N,\utilde J}(\psi)]\!]\in H,
$
and therefore
\[
V[H]\Vdash_{\utilde N}\psi .
\]

Conversely, assume every accessible world satisfies $\psi$. If
$
[\![\nu_{\utilde N,\utilde J}(\psi)]\!]\notin\utilde N[G],
$
extend $\utilde N[G]$ to the ultrafilter
$\utilde{\mathcal U}[G]$ witnessing the definition of $\utilde N$.
The corresponding world is accessible from $V[G]$, but the Boolean
value of $\neg\psi$ belongs to this ultrafilter, contradiction.
Hence
$
[\![\nu_{\utilde N,\utilde J}(\psi)]\!]\in\utilde N[G],
$
and therefore
\[
V[G]\Vdash_{\utilde N}\Box\psi .
\]
\end{proof}

\begin{theo}[Completeness for Kripke models]
For every $\varphi\in\mathcal L_m$, the following are equivalent:
\begin{enumerate}
\item $T_m\vdash\varphi$,

\item
$\mathfrak F(\utilde N,\utilde J)\models\varphi,
$
for every admissible pair of names $\utilde N,\utilde J$.
\end{enumerate}
\end{theo}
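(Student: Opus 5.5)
The plan is to derive both directions from the Boolean-valued soundness and completeness theorems (Theorems~\ref{theo:soundness}, \ref{theo:completeness} and the stronger Theorem~\ref{com}), using Lemma~\ref{lem:section5-induction} to pass between the forcing semantics and the Kripke semantics. Here "$\mathfrak F(\utilde N,\utilde J)\models\varphi$" means that $\varphi$ is true at every world of the model $\mathfrak M(\utilde N,\utilde J)$ for which $\mathfrak M(\utilde N,\utilde J)\Vdash T_m$ holds. "Admissible" means $\utilde N\in\mathbb L$ together with an arbitrary $\utilde J$.

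\textbf{(1)$\Rightarrow$(2).} Assume $T_m\vdash\varphi$ and fix an admissible pair. By Theorem~\ref{theo:soundness}, $\Vdash_{\mathsf C_I}``\nu_{\utilde N,\utilde J}(T_m)\vdash\nu_{\utilde N,\utilde J}(\varphi)"$. By the argument in Lemma~\ref{lem:necessitation}, there is a finite $F\subseteq T_m$ with
\[
\Bigl[\!\Bigl[\bigwedge_{\psi\in F}\utilde\nu(\psi)\Bigr]\!\Bigr]\le[\![\utilde\nu(\varphi)]\!].
\]
Now take a world $V[G]$ at which all translated axioms of $T_m$ hold. Then $[\![\utilde\nu(\psi)]\!]\in G$ for each $\psi\in F$. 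Since $G$ is a filter, it follows that $[\![\utilde\nu(\varphi)]\!]\in G$, that is, $V[G]\Vdash_{\utilde N}\varphi$. Lemma~\ref{lem:section5-induction} converts this into $\mathfrak M(\utilde N,\utilde J),V[G]\models\varphi$. The same computation handles open formulas: the universal quantifier clause of Definition~\ref{def:translation0} is a meet over names, so we apply it to the universal closure.

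\textbf{(2)$\Rightarrow$(1).} Argue contrapositively. Suppose $T_m\nvdash\varphi$. Theorem~\ref{com} gives a name $\utilde{\mathcal U}$ forced to be an ultrafilter. Setting $\utilde N=\utilde{\mathcal U}\cap\dot G_{\mathsf C_I}$, we get $\utilde N\in\mathbb L$ and
\[
\Vdash_{\mathsf C_I}``\nu_{\utilde N,\utilde J}(T_m)\nvdash\nu_{\utilde N,\utilde J}(\varphi)".
\]
From the proof of Theorem~\ref{theo:completeness} we additionally know that the Boolean value $a$ of $\nu(\neg\varphi)$, after transport along $\widehat h$ as in Lemma~\ref{lem:henkin-transport}, is nonzero. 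We then choose an ultrafilter $G$ with $a\in G$ and all values $[\![\utilde\nu(\psi)]\!]$, $\psi\in T_m$, in $G$. This is possible because these values are $1$ for theorems of $T_m$. Note that $\mathcal W(\utilde N)$ consists of $V[G]$ for \emph{all} $\mathsf C_I$-ultrafilters, so genericity is not needed. At this world $V[G]\Vdash_{\utilde N}T_m$ and $V[G]\Vdash_{\utilde N}\neg\varphi$. By Lemma~\ref{lem:section5-induction}, $\mathfrak M(\utilde N,\utilde J),V[G]$ is a world of a model of $T_m$ that refutes $\varphi$, so (2) fails.

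\textbf{Main obstacle.} The delicate step is in (2)$\Rightarrow$(1): extracting, from the purely forced statement "$\nvdash$", an actual world in which $\neg\varphi$ holds. The forced non-derivability only says that the Boolean value of $\nu(\neg\varphi)$ is nonzero, and this is exactly what Theorem~\ref{com} supplies via $a_I\neq 0$. My first attempt would be to reuse that element directly: pick any ultrafilter of $\mathsf C_I$ containing $a_I$ and check that the worlds built from non-generic ultrafilters still satisfy the inductive clauses of Lemma~\ref{lem:section5-induction}. If that check fails, I would instead work over a countable transitive model $\mathcal M$ and pick $G$ to be $\mathcal M$-generic below $a_I$, which exists by countability. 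In that case $\mathcal M[G]$ satisfies the translation by the forcing theorem.
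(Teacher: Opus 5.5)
Your outline follows the paper's route: soundness and Lemma~\ref{lem:section5-induction} for (1)$\Rightarrow$(2), and Theorem~\ref{com} plus Lemma~\ref{lem:section5-induction} for the converse. Relative to those cited results, (1)$\Rightarrow$(2) is fine. The gap is the step that carries (2)$\Rightarrow$(1). You need an admissible $\utilde N$ and an ultrafilter $G$ with $b_{\utilde N,\utilde J}(\neg\varphi)\in G$, i.e.\ $b_{\utilde N,\utilde J}(\neg\varphi)\neq 0$, and the proof of Theorem~\ref{theo:completeness} does not supply this. What it shows is $a_I=[(\neg\varphi)^\pi]_I\neq 0$. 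That is the value of the \emph{first-order} sentence attached to $\neg\varphi$ by an arbitrary Cantor--Tarski isomorphism $\pi:\mathsf B_m\to\mathsf B$. Three things block your use of it:
\begin{enumerate}
\item Nothing makes $\pi$ commute with the recursive translation, in particular with the clause $b(\Box\psi)=[\![b(\psi)\in\utilde N]\!]$.
\item Lemma~\ref{lem:henkin-transport} keeps the sentence $\theta$ fixed; it does not trade $\theta^\pi$ for $\theta$.
\item The transported names live over $\mathsf C_{m,I_m}$, not over the $\mathsf C_I$ on which $\mathfrak M(\utilde N,\utilde J)$ is built.
\end{enumerate}
So putting $a_I$ into $G$ says nothing about $\varphi$ at $V[G]$. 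Your countable-model fallback fixes genericity, not this. The paper's own proof skips the same point with ``some Boolean-valued interpretation satisfies the translated theory together with the negation of the translation of $\varphi$''.

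Worse, the step cannot be repaired for $\utilde N\in\mathbb L$. Suppose $\Vdash_{\mathsf C_I}\utilde N=\utilde{\mathcal U}\cap\dot G_{\mathsf C_I}$, and set $f(c)=[\![\check c\in\utilde{\mathcal U}]\!]$. Because $\utilde{\mathcal U}$ is forced to be an ultrafilter, $f$ is a Boolean endomorphism of $\mathsf C_I$, and $b(\Box\chi)=f(b(\chi))\wedge b(\chi)$. Now take $p_i\in\mathcal L$ and $\varphi=\Box p_1\vee\Box(p_1\to p_2)\vee\Box(p_1\wedge p_2\to p_3)$. Put $X_1=\neg b(p_1)$, $X_2=b(p_1)\wedge\neg b(p_2)$ and $X_3=b(p_1)\wedge b(p_2)\wedge\neg b(p_3)$; these are pairwise disjoint. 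Then $\neg b(\varphi)=\bigwedge_{i\le 3}(X_i\vee f(X_i))$. Every term of its expansion contains, for some $i\neq j$, either $X_i\wedge X_j=0$ or $f(X_i)\wedge f(X_j)=f(X_i\wedge X_j)=f(0)=0$. Hence $b(\varphi)=1$ for every admissible pair. By Lemma~\ref{lem:section5-induction}, which you rely on, $\varphi$ then holds at every world of every $\mathfrak M(\utilde N,\utilde J)$. On the other side, choose the $p_i$ so that $T+\neg p_1$, $T+p_1\wedge\neg p_2$ and $T+p_1\wedge p_2\wedge\neg p_3$ are consistent; this is possible because $\mathsf B_T$ is atomless. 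A reflexive constant-domain Kripke model whose root sees three $T$-worlds realizing these validates $T_m$ and refutes $\varphi$ at the root. So $\varphi$ is not derivable in the least normal extension of $T$ containing $\Box\psi\to\psi$, which is the $T_m$ the soundness argument covers. Thus (2)$\Rightarrow$(1) fails as stated. Dually, each point of $\operatorname{St}(\mathsf C_I)$ sees only itself and its image under the map dual to $f$, so the semantics validates $\mathsf{Alt}_2$. No choice of world or of $\utilde N\in\mathbb L$ can produce the refuting world your argument needs.
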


\begin{proof}
Assume first that $T_m\vdash\varphi$. By Theorem~\ref{theo:completeness},
$
\Vdash_{\mathsf C_I}
``\nu(T_m)\vdash\nu(\varphi)".
$
Hence every ultrafilter evaluation satisfies the translation of
$\varphi$. By Lemma~\ref{lem:section5-induction}, every associated
Kripke model satisfies $\varphi$.

Conversely, assume that $T_m\nvdash\varphi$. By Theorem~\ref{com}
there are  names $\utilde N,\utilde J$ such that
$
\Vdash_{\mathsf C_I}
``\nu(T_m)\nvdash\nu(\varphi)".
$
Therefore some Boolean-valued interpretation satisfies the translated
theory together with the negation of the translation of $\varphi$.
By Lemma~\ref{lem:section5-induction}, the associated Kripke model
satisfies $T_m$ but fails $\varphi$.
\end{proof}


\end{document}